\documentclass[12pt]{amsart}

\textwidth=5.57in \oddsidemargin=.4in \evensidemargin=0.4in

\usepackage{amssymb,mathrsfs} 
\usepackage{graphicx}

\usepackage{color} %

\newtheorem{theorem}{Theorem}

\newtheorem{proposition}[theorem]{Proposition}
\newtheorem{observation}[theorem]{Observation}
\newtheorem{corollary}[theorem]{Corollary}

\newtheorem{problem}{Problem}

\newtheorem{them}{Theorem}
\newtheorem{lema}[them]{Lemma}


\theoremstyle{definition}

\newtheorem{remark}[theorem]{Remark}

\begin{document}

\title[Unchanging Roman domination]{Roman domination in graphs: the class  $\mathcal{R}_\mathbf{UV R}$}

\author{Vladimir Samodivkin}
\address{Department of Mathematics, UACEG, Sofia, Bulgaria}
\email{vl.samodivkin@gmail.com}
\today
\keywords{Roman domination number; Roman bondage; domination; tree}

\begin{abstract}
For a graph $G = (V, E)$, a Roman dominating function $f : V \rightarrow \{0, 1, 2\}$ 
has the property that every vertex $v \in V $with $f (v) = 0$ has a neighbor $u$ with 
$f (u) = 2$. The weight of a Roman dominating function $f$ is the sum $f (V) = \cup_{v\in V} f (v)$, and
the minimum weight of a Roman dominating function on $G$ is the Roman domination
number $\gamma_R(G)$  of $G$. 
The Roman bondage number $b_R(G)$ of $G$ is the minimum cardinality of all sets 
$F \subseteq E$ for which $\gamma_R(G - F) > \gamma_R(G)$.
A graph $G$ is in the class $\mathcal{R}_{UVR}$ if 
the Roman domination number remains unchanged when a vertex is deleted.
In this paper we obtain tight upper bounds for $\gamma_R(G)$ 
and $b_R(G)$ provided a graph $G$ is in $\mathcal{R}_{UVR}$.  
We present necessary and sufficient conditions for 
a tree to be in the class $\mathcal{R}_{UV R}$. We  give 
a constructive characterization of $\mathcal{R}_{UVR}$-trees 
using labellings. 
\end{abstract}

\maketitle

{\bf  MSC 2010}: 05C69


\large

\section{Introduction} 

All graphs considered in this paper are finite, undirected, loopless, and without multiple edges. We
refer the reader to the book \cite{w} for graph theory notation and terminology not described in this paper.
We denote the vertex set and the edge set of a graph $G$ by $V(G)$ and $ E(G),$  respectively. 
 We write $K_n$ for the {\em complete graph} of order $n$ and $C_n$ for a {\em cycle} of length $n$. 
Let $P_m$	 denote the {\em path} with $m$ vertices. 
In a graph $G$, for a subset $S \subseteq V(G)$ the {\em subgraph induced} by $S$ is the graph $\left\langle S\right\rangle$ 
with vertex set $S$ and edge set $\{xy \in E(G)\colon x,y \in S\}$. 
The {\em complement} $\overline{G}$ of $G$ is the simple graph
whose vertex set is $V(G)$ and whose edges are the pairs of
nonadjacent vertices of $G$. 
The {\em join}  of simple graphs $G$ and $H$, written $G \vee H$, is the 
graph obtained from the disjoint union of $G$ and $H$ by adding the edges $\{xy \mid x\in V(G), \ y \in V(H)\}$. 
	For any vertex $x$ of a graph $G$,  $N_G(x)$ denotes the set of all  neighbors of $x$ in $G$,  
	$N_G[x] = N_G(x) \cup \{x\}$ and the {\it degree} of $x$ is $deg_G(x) = |N_G(x)|$. 
	The {\em minimum} and {\em maximum} degree of a graph $G$ are denoted by $\delta(G)$ and $\Delta(G)$, respectively.
   For a graph $G$, let $x \in X \subseteq V(G)$. 
   A vertex $y \in V(G)$ is a $X$-{\it private neighbor}  of $x$ if $N_G[y] \cap X = \{x\}$. 
		The  $X$-{\it private neighborhood}  of $x$, denoted $pn_G[x,X]$, is  the set of all $X$-private neighbors of $x$.

A set $D \subseteq V(G)$ is a {\em dominating set} if for each $x \in
V(G)$ either $v \in D$ or $x$ is adjacent to some $y \in D$. 
 The {\em domination number} $\gamma(G)$ is the minimum cardinality of a
dominating set of $G$, and a dominating set $D$ of minimum
cardinality is called a $\gamma$-$set$ of $G$.
If $D$ is a $\gamma$-set of $G$, then $pn[v, S] \not = \emptyset$ for each $v \in D$. 
An efficient dominating set  in a graph $G$ is a set $S \subseteq V(G)$
 such that $\{N[s] \mid  s \in S\}$  is a partition of $V (G)$.
 All efficient dominating sets in graph $G$ have the same cardinality that is equal to $\gamma(G)$ ( \cite{hhs1}). 
 The concept of domination in graphs has many applications to several fields. 
 Domination naturally arises in facility location problems,  
 in problems involving finding sets of representatives, in monitoring communication
or electrical networks, and in land surveying.  Many variants of the basic concepts
of domination have appeared in the literature. We refer to \cite{hhs1,hhs2}  for a survey of the area.

A variation of domination called Roman domination was introduced independently by Arquilla and Fredricksen \cite{af},
ReVelle  \cite{re1,re2} and Stewart \cite{s}, which was motivated with the following legend.
 In the $4$th century A.D., Emperor Constantine the Great issued a decree to ensure the protection of
the Roman empire. Constantine ordered that each city in the empire either has a legion
stationed within it for defense or lies near a city with two standing legions. This way,
if a defenseless city were attacked, a nearby city could dispatch reinforcements without
leaving itself defenseless. The natural problem is to determine how few legions suffice
to protect the empire. The concept of Roman domination can be formulated in terms of
graphs.  More formally, following Cockayne et al. \cite{cd04}, a {\em Roman dominating  function} ({\em RDF}) on a
graph $G$ is a vertex labeling $f : V(G) \rightarrow \{0, 1, 2\}$
 such that every vertex with label $0$ has a neighbor with label $2$. 
 For an RDF $f$, let $ V_i^f = \{v \in V (G) : f(v) = i\}$ for i = 0, 1, 2. 
 Since this partition determines $f$, we can equivalently write 
 $f=(V_0^f; V_1^f; V_2^f)$.
 The weight $f(V(G))$ of an RDF $f$ on $G$ is the value $\Sigma_{v\in V(G)} f (v)$, 
  which equals $|V_1^f| + 2|V_2^f|$. 
The {\em Roman domination number} of a graph $G$, denoted by $\gamma_R(G)$, 
is the minimum weight of a Roman dominating function on $G$. 
 Thus, $\gamma_R(G)$ is the minimum number of legions needed to protect cities whose
adjacency graph is $G$.
A function $f=(V_0^f; V_1^f; V_2^f)$  is called a $\gamma_R$-{\em function} on $G$, 
if it is a Roman dominating function and $f(V (G)) = \gamma_R(G)$.  
If $f$ is an RDF on a graph $G$ and $H$ is a subgraph of $G$, 
then we denote the restriction of $f$ on $H$ by $f|_H$.

Cockayne et al.\cite{cd04} showed that
\begin{equation}\label{eqq}\gamma(G)\le \gamma_{R}(G)\le 2\gamma(G).\end{equation}
A graph $G$ is called to be {\it Roman} if $\gamma_{\rm R}(G)=2\gamma(G)$.
All Roman paths and cycles are $P_{3k}, C_{3k}, P_{3k+2}$, and $C_{3k+2}$ (\cite{cd04}). 
Liedloff et al. \cite{lklpo} and Liu and Chang \cite{lc} 
investigated algorithmic aspect of Roman domination.
 Applications of Roman domination were also shown in \cite{ck10}.
Also see ReVelle and Rosing \cite{rer} for an integer programming
formulation of the problem.

The concept of  Roman bondage in graphs was introduced by Jafari Rad and Volkmann in \cite{rv0}. 
Let $G$ be a graph with maximum degree at least two. The Roman bondage number
$b_R(G)$ of $G$ is the minimum cardinality of all sets $E_1 \subseteq E(G)$ for which
$\gamma_R(G - E_1) > \gamma_R(G)$.
This number is a measure of the efficiency of Roman domination in graphs. 
In \cite{bhsx}, Bahremandpour et al. showed that
the decision problem for $b_R(G)$ is $NP$-hard even for bipartite graphs. 
For more information we refer the reader to \cite{bhsx, rv0,samcz,samijgta}.

When we remove a vertex from a graph G, the Roman domination number 
can increase or decrease, or remain the same. 
The following classes of graphs were defined and first investigated in \cite{rv} by Jafari Rad and Volkmann. 

\begin{itemize}
\item[$\bullet$] $\mathcal{R}_{CV R}$ is the class of graphs $G$ such that 
                    $\gamma_R(G-v) \not= \gamma_R(G)$ for all $v \in V (G)$,
\item[$\bullet$] $\mathcal{R}_{UV R}$ is the class of graphs $G$ such that 
                    $\gamma_R(G-v) = \gamma_R(G)$ for all $v \in V (G)$.
	\end{itemize}

Here we concentrate on the class $\mathcal{R}_{UV R}$. 
The paper is organized as follows. 
Section 2 contains some known facts  about Roman domination in graphs.
In Section 3 we obtain tight upper bounds for $\gamma_R(G)$ 
and $b_R(G)$ provided a graph $G$ is in $\mathcal{R}_{UVR}$.  
In Section 4 we present necessary and sufficient conditions for 
a tree to be in the class $\mathcal{R}_{UV R}$. We also give 
a constructive characterization of $\mathcal{R}_{UVR}$-trees 
using labellings.

\section{Some  known results}


\begin{lema}\label{on} {\rm(\cite{cd04})} 
Let $f = (V_0; V_1; V_2)$ be any $\gamma_R$-function on a graph $G$. 
Then each component of a  graph $\left\langle V_1 \right\rangle$ has order at least $2$  and 
no edge of $G$ join $V_1$ and $V_2$. 
\end{lema}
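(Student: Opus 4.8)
The plan is to prove both assertions by contradiction, via the standard reassignment argument for Roman dominating functions: starting from a $\gamma_R$-function $f=(V_0;V_1;V_2)$ that violates one of the two properties, I would exhibit a Roman dominating function $g$ differing from $f$ at only one or two vertices, verify directly that $g$ is an RDF, and conclude $g(V(G))<f(V(G))=\gamma_R(G)$ --- a contradiction.

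For the assertion that no edge joins $V_1$ and $V_2$ this is quick, and I would do it first because it feeds into the other part. Suppose $uv\in E(G)$ with $u\in V_1$ and $v\in V_2$, and let $g$ coincide with $f$ except that $g(u)=0$. No label $2$ was removed, so every vertex that $f$ assigned $0$ still has a neighbour of label $2$; and the single vertex whose value dropped, namely $u$, has the neighbour $v$ with $g(v)=2$. Hence $g$ is an RDF of weight $f(V(G))-1$, contradicting the minimality of $f$.

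For the assertion about the components of $\langle V_1\rangle$, suppose some component is a single vertex $v$, so $f(v)=1$ and $N_G(v)\cap V_1=\emptyset$; by the part just proved, $N_G(v)\cap V_2=\emptyset$ as well, hence $N_G(v)\subseteq V_0$. In particular each neighbour $u$ of $v$, lying in $V_0$, has a neighbour of label $2$, and that neighbour is not $v$; so lowering $f(v)$ to $0$ leaves every neighbour of $v$ still Roman-dominated. I would then modify $f$ around $v$ --- setting $f(v)=0$ and reallocating the freed unit, exploiting the spare label-$2$ witnesses of the vertices of $N_G(v)$ --- to produce an RDF of strictly smaller weight, again contradicting minimality. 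The step I expect to be the main obstacle is exactly this last reassignment: once $f(v)$ is set to $0$ one must supply $v$ itself with a label-$2$ neighbour while keeping every other vertex Roman-dominated, and one has to arrange this so that the net weight genuinely decreases; making these bookkeeping requirements compatible is where the structural facts $N_G(v)\subseteq V_0$ and ``each vertex of $N_G(v)$ has a label-$2$ neighbour other than $v$'' are essential.
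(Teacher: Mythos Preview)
The paper does not give its own proof of this lemma; it simply cites the result from \cite{cd04} and announces that it will be used without further reference, so there is nothing in the paper to compare your argument against.

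More to the point, the lemma as printed contains a typographical error: the assertion should be that every component of $\langle V_1\rangle$ has order \emph{at most} $2$, not ``at least~$2$''. The statement you are attempting to prove is false. Take $G=P_4$ with vertices $a,b,c,d$ in order and define $f(a)=1$, $f(b)=0$, $f(c)=2$, $f(d)=0$; this is a $\gamma_R$-function (weight $3=\gamma_R(P_4)$) with $V_1^f=\{a\}$, a single isolated vertex of $\langle V_1\rangle$.

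Your proof of the second clause (no edge joins $V_1$ and $V_2$) is correct and is the standard argument. For the first clause you correctly isolate the difficulty---after lowering $f(v)$ from $1$ to $0$ you must provide $v$ with a label-$2$ neighbour, which costs $2$ against a saving of only $1$---but you do not resolve it, and you cannot: the $P_4$ example shows that no reassignment of the sort you describe can strictly decrease the weight. The correct (and easy) statement is that no component of $\langle V_1\rangle$ has three or more vertices: if $u,v,w\in V_1$ with $v$ adjacent to both $u$ and $w$, then setting $g(v)=2$, $g(u)=g(w)=0$ and $g=f$ elsewhere yields an RDF of weight $f(V(G))-1$, contradicting minimality.
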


Lemma  \ref{on}   will be used in the sequel without specific reference.

\begin{lema}\label{minus} {\rm(\cite{rhv})} 
Let $v$ be a vertex of a graph $G$. Then $\gamma_R(G-v) < \gamma_R(G)$ 
 if and only if there is a $\gamma_R$-function $f = (V_0; V_1; V_2)$ on $G$
 such that $v \in V_1$. If $\gamma_R(G-v) < \gamma_R(G)$ then $\gamma_R(G-v) = \gamma_R(G) - 1$. 
\end{lema}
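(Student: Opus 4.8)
The plan is to prove both implications directly by transporting a Roman dominating function between $G$ and $G-v$, and then to read off the value statement $\gamma_R(G-v)=\gamma_R(G)-1$ from the two inequalities produced along the way.

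First I would handle the ``if'' direction. Suppose $f=(V_0;V_1;V_2)$ is a $\gamma_R$-function on $G$ with $v\in V_1$, and let $f'$ be the restriction $f|_{G-v}$. Since $f(v)=1\neq 2$, the vertex $v$ is not the label-$2$ witness for any vertex of $G$ carrying label $0$; hence every $u\neq v$ with $f'(u)=0$ still has a neighbour $w\neq v$ with $f'(w)=2$, so $f'$ is an RDF on $G-v$. Its weight is $f(V(G))-f(v)=\gamma_R(G)-1$, which gives $\gamma_R(G-v)\le\gamma_R(G)-1<\gamma_R(G)$.

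Next I would treat the ``only if'' direction together with the final sentence. Assume $\gamma_R(G-v)<\gamma_R(G)$ and let $g=(W_0;W_1;W_2)$ be a $\gamma_R$-function on $G-v$. Define $f$ on $G$ by $f(v)=1$ and $f(u)=g(u)$ for $u\neq v$. Reinserting $v$ creates no new label-$0$ vertex and destroys no edge of $G-v$, so every $u\neq v$ with $f(u)=0$ keeps its label-$2$ neighbour while $v$ itself needs no witness; thus $f$ is an RDF on $G$ of weight $\gamma_R(G-v)+1$. Therefore $\gamma_R(G)\le\gamma_R(G-v)+1$, and combined with the hypothesis $\gamma_R(G-v)\le\gamma_R(G)-1$ this forces $\gamma_R(G-v)=\gamma_R(G)-1$. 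In particular $f$ has weight exactly $\gamma_R(G)$, so $f$ is a $\gamma_R$-function on $G$ with $v\in V_1^f$, which completes the equivalence.

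The argument is essentially bookkeeping, and I do not expect a genuine obstacle; the only point that needs care is that in the forward direction one must assign $v$ the label $1$ rather than trying to put it in $V_0$, since $g$ provides no guaranteed label-$2$ neighbour of $v$, whereas the constant choice $f(v)=1$ always yields a legal RDF and adds the minimum possible weight, so that the resulting function is automatically optimal.
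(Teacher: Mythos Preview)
Your argument is correct and is the standard proof of this fact. Note that the paper does not supply its own proof of this lemma: it is quoted in Section~2 (``Some known results'') with a citation to \cite{rhv}, so there is nothing to compare against beyond confirming that your reasoning is sound, which it is.
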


\begin{lema}\label{minuse} {\rm(\cite{rv})} 
If $e$ is an edge of a graph $G$, then $\gamma_R(G-e) \geq \gamma_R(G)$. 
\end{lema}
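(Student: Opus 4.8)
The plan is to prove the inequality directly, by showing that any Roman dominating function on $G - e$ is automatically a Roman dominating function on $G$; the inequality then follows because $\gamma_R(G)$ is a minimum over a family that contains this function.

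First I would fix a $\gamma_R$-function $f = (V_0; V_1; V_2)$ on $G - e$, so that $f(V(G - e)) = \gamma_R(G - e)$, and recall that the only requirement for $f$ to be an RDF is that every vertex in $V_0$ has a neighbor in $V_2$. Next I would use the trivial but crucial observation that deleting an edge only removes adjacencies and never creates new ones, i.e. $E(G - e) \subseteq E(G)$ while $V(G - e) = V(G)$. Hence, if a vertex $v \in V_0$ has a neighbor $u \in V_2$ in $G - e$, then $uv \in E(G - e) \subseteq E(G)$, so $u$ is still a neighbor of $v$ in $G$; therefore the same labeling $f$, now regarded as a vertex labeling of $G$, still satisfies the RDF condition. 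Thus $f$ is an RDF on $G$.

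Finally, since the two graphs share the same vertex set, the weight of $f$ is the same whether computed on $G$ or on $G - e$, so $\gamma_R(G) \le f(V(G)) = f(V(G - e)) = \gamma_R(G - e)$, which is exactly the asserted bound. There is essentially no hard step here: the only point requiring minor care is keeping the direction of the edge-set inclusion straight (edge deletion shrinks $E$, so an RDF on the smaller graph survives on the larger one, not conversely) and noting that the identical vertex sets make the weight comparison legitimate.
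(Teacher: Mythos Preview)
Your argument is correct: any RDF on $G-e$ remains an RDF on $G$ because adding the edge $e$ back can only create additional adjacencies, so every vertex of $V_0$ still has its $V_2$-neighbor; since the vertex sets coincide, the weights agree and $\gamma_R(G)\le\gamma_R(G-e)$ follows. The paper itself does not give a proof of this lemma --- it is simply quoted from \cite{rv} as a known result --- so there is no alternative approach to compare against; your direct verification is the standard one.
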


\begin{them}\label{r} {\rm(\cite{cd04})} 
A graph G is Roman if and only if it has a $\gamma_R$-function $f$ with $V_1^f = \emptyset$. 
\end{them}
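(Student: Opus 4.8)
The plan is to prove the two implications separately, the central tool being the sandwich bound $\gamma(G) \le \gamma_R(G) \le 2\gamma(G)$ recorded in (\ref{eqq}).

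For the forward direction, assume $G$ is Roman, i.e.\ $\gamma_R(G) = 2\gamma(G)$. I would start from a $\gamma$-set $D$ of $G$ and define $f$ by $f(v) = 2$ for $v \in D$ and $f(v) = 0$ otherwise, so that $V_2^f = D$, $V_1^f = \emptyset$, and $V_0^f = V(G) \setminus D$. Since $D$ is dominating, every vertex carrying label $0$ has a neighbor in $D = V_2^f$, so $f$ is an RDF; its weight is $2|D| = 2\gamma(G) = \gamma_R(G)$, whence $f$ is a $\gamma_R$-function with $V_1^f = \emptyset$, as required.

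For the converse, suppose $f = (V_0; V_1; V_2)$ is a $\gamma_R$-function on $G$ with $V_1 = \emptyset$. Then $V(G) = V_0 \cup V_2$, and every vertex of $V_0$ has a neighbor in $V_2$, so $V_2$ is a dominating set and $\gamma(G) \le |V_2|$. Consequently $\gamma_R(G) = f(V(G)) = |V_1| + 2|V_2| = 2|V_2| \ge 2\gamma(G)$. Combining this with the upper bound $\gamma_R(G) \le 2\gamma(G)$ of (\ref{eqq}) forces $\gamma_R(G) = 2\gamma(G)$, i.e.\ $G$ is Roman.

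Neither direction presents a genuine obstacle; the only points needing mild care are checking that the $f$ built in the forward direction really is an RDF, and appealing to (\ref{eqq}) in the converse so as to pin down equality rather than merely $\gamma_R(G) \ge 2\gamma(G)$. Beyond the weight identity $f(V(G)) = |V_1^f| + 2|V_2^f|$ and the definition of domination, no further machinery is needed.
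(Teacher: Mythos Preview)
Your proof is correct. Note that the paper does not actually supply a proof of this statement---it is quoted as a known result from \cite{cd04}---so there is no in-paper argument to compare against. What you have written is the standard proof: build the RDF from a $\gamma$-set in the forward direction, and in the converse observe that $V_2^f$ dominates and invoke the sandwich bound~(\ref{eqq}) to force equality.
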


\begin{them}\label{un}{\rm{\cite{fv}}}
Let $T$ be a tree of order at least $3$ and let $D$ be a dominating set of $G$. 
Then $D$ is the unique $\gamma$-set of $T$ if and only if every vertex in 
$D$ has at least two nonadjacent $D$-private neighbors. 
\end{them}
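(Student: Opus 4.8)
The plan is to prove the two implications of Theorem~\ref{un} separately, exploiting two structural features of trees: $T$ is triangle-free, and any two distinct non-adjacent vertices of $T$ have at most one common neighbour (a second common neighbour would close a $4$-cycle). From these I extract two observations used repeatedly. \emph{Observation 1:} if $a,b$ are \emph{non-adjacent} $D$-private neighbours of a vertex $x\in D$, then necessarily $a\neq x\neq b$ (a private neighbour other than $x$ is adjacent to $x$, so if one of $a,b$ equalled $x$ the two would be adjacent); hence $ax,bx\in E(T)$ and, by triangle-freeness and the ``one common neighbour'' fact, $N_T[a]\cap N_T[b]=\{x\}$. \emph{Observation 2:} from $N_T[a]\cap D=\{x\}$ one gets $N_T[a]\setminus\{x\}\subseteq V(T)\setminus D$, and likewise for $b$.

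For the implication ``$D$ is the unique $\gamma$-set $\Rightarrow$ private-neighbour condition'' I would argue by contraposition. Suppose some $x\in D$ has no two non-adjacent $D$-private neighbours. If $pn_T[x,D]=\emptyset$ then $D\setminus\{x\}$ already dominates $T$, so $D$ is not a $\gamma$-set; assume therefore $pn_T[x,D]\neq\emptyset$. Then all members of $pn_T[x,D]$ are pairwise adjacent, so $pn_T[x,D]$ is a clique; a clique in a tree has at most two vertices, and if it has two, say $\{p,q\}$ with $pq\in E(T)$, then $p=x$ or $q=x$ (two private neighbours both different from $x$ would be adjacent to $x$ and to each other, a triangle). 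In each of the surviving cases --- $pn_T[x,D]=\{y\}$ with $y$ external, $pn_T[x,D]=\{x\}$, $pn_T[x,D]=\{x,y\}$ with $y$ external --- one can choose a vertex $w\in N_T(x)\setminus D$ such that $D^{*}=(D\setminus\{x\})\cup\{w\}$ is again a dominating set of $T$: the only vertices at risk of becoming undominated lie in $pn_T[x,D]$, and $w$ (namely the unique external private neighbour $y$, or an arbitrary neighbour of $x$ when $pn_T[x,D]=\{x\}$) was picked precisely to cover them. Since $w\notin D$ we have $|D^{*}|=|D|$ and $D^{*}\neq D$, so $D$ is not the unique $\gamma$-set. (The case $|D|=1$, where $T$ is a star, does not occur under the contrapositive hypothesis, since there $pn_T[x,D]=V(T)$ contains two non-adjacent vertices.)

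For the converse, assume every $x\in D$ carries two fixed non-adjacent $D$-private neighbours $a_x,b_x$. It suffices to show that every dominating set $D'$ of $T$ with $|D'|\le|D|$ equals $D$, since this forces $D$ to be minimum and the only minimum dominating set. Fix such a $D'$ and let $x\in D\setminus D'$. As $D'$ dominates $a_x$ and $b_x$ but $x\notin D'$, there exist $t_a\in D'\cap(N_T[a_x]\setminus\{x\})$ and $t_b\in D'\cap(N_T[b_x]\setminus\{x\})$; by Observation~1 the sets $N_T[a_x]\setminus\{x\}$ and $N_T[b_x]\setminus\{x\}$ are disjoint, so $t_a\neq t_b$, and by Observation~2 both lie in $D'\setminus D$. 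Thus every $x\in D\setminus D'$ must ``spend'' two distinct vertices of $D'\setminus D$. If, for distinct $x,x'\in D\setminus D'$, the pairs so spent were always disjoint, we would get $|D'\setminus D|\ge 2\,|D\setminus D'|$, which combined with $|D'\setminus D|\le|D\setminus D'|$ (from $|D'|\le|D|$) yields $D\setminus D'=\emptyset$, hence $D\subseteq D'$ and finally $D'=D$.

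The crux --- the step I expect to resist the most --- is ruling out \emph{collisions}: a vertex $w\in D'\setminus D$ lying in $N_T[a_x]\cap N_T[a_{x'}]$ (or in $N_T[a_x]\cap N_T[b_{x'}]$, etc.) for two distinct $x,x'\in D\setminus D'$. Such a $w$ is within distance one of private neighbours hanging off two different vertices of $D$, so the path in $T$ from $x$ to $x'$ through $w$ has length at most four; since $w\notin D$ but must be dominated by $D$, it has a further neighbour in $D\setminus\{x,x'\}$, and one can then check that the ``other'' private neighbours $b_x,b_{x'}$ lie too far along $T$ from $w$ and from each other for their own dominators in $D'$ to be reused, so that $D\setminus D'\neq\emptyset$ still forces $|D'\setminus D|>|D\setminus D'|$ after careful bookkeeping. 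An alternative route avoiding this analysis is induction on $|V(T)|$: a support vertex $p$ at maximum distance from a fixed root lies in $D$ (a leaf cannot have two non-adjacent private neighbours), its leaf-children are $D$-private neighbours of $p$, and by deleting $p$ and/or some of its leaf-children one obtains a strictly smaller tree to which the induction hypothesis applies --- the price being a somewhat delicate check that the private-neighbour condition is inherited by the reduced tree.
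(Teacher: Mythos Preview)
The paper does not prove Theorem~\ref{un} at all: it is quoted in Section~2 (``Some known results'') as a tool from Fischermann and Volkmann~\cite{fv}, with no argument given. So there is no proof in the paper to compare your attempt against.

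On the merits of your attempt: the forward direction is correct and complete. Your case analysis of $pn_T[x,D]$ when it fails to contain two non-adjacent vertices is clean, and the swap $D^*=(D\setminus\{x\})\cup\{w\}$ works in each subcase.

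The backward direction, however, has a genuine gap exactly where you flag it. Collisions \emph{do} occur: on $P_9=v_1v_2\cdots v_9$ with $D=\{v_2,v_5,v_8\}$, take $b_{v_2}=v_3$ and $a_{v_5}=v_4$; then $N[b_{v_2}]\cap N[a_{v_5}]=\{v_3,v_4\}\subseteq V(T)\setminus D$, so a single vertex of $D'\setminus D$ can legitimately serve as both $t_b(v_2)$ and $t_a(v_5)$. Thus the naive inequality $|D'\setminus D|\ge 2\,|D\setminus D'|$ is not available, and your phrase ``after careful bookkeeping'' is not a proof. The alternative inductive route you mention is also only a sketch: you have not verified that deleting a deepest support vertex (or some of its leaves) preserves the hypothesis that every remaining vertex of $D$ retains two non-adjacent private neighbours in the smaller tree, and that step is where the work lies. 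Either approach can be made to succeed, but as written the backward implication is incomplete.
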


The differential of a graph was introduced in \cite{mhhhs} in 2006, 
 motivated by its applications to information diffusion in social networks. 
The {\it differential of a vertex set} $S\subseteq V(G)$  is defined as
$\partial(S) = |B(S)|-|S|$, where $B(S)$ is the set of vertices in $V(G) - S$ that have a neighbor
in  $S$, and the {\it differential of a graph} $G$ is defined as 
$\partial(G) =\max\{\partial(S) \mid S \subseteq V(G) \}$. 
A set  $S \subseteq V(G)$ is a $\partial$-{\it set} of $G$  if $\partial(S) = \partial(G)$.

\begin{them}\label{diff}\cite{bfs}
Let $G$ be a graph. 
\begin{itemize}
\item[(i)] Then $\gamma_R(G) + \partial(G) = |V(G)|$.
\item[(ii)] An RDF $f = (V_0; V_1; V_2)$ is a $\gamma_R$-function on $G$ 
 if and only if $V_2$ is a $\partial$-set of $G$ and $V_0 = B(V_2)$.
\end{itemize}
\end{them}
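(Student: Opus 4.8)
The plan is to set up a weight-preserving correspondence between Roman dominating functions on $G$ and vertex subsets of $G$, and to read off both parts from it. Write $n = |V(G)|$. For an arbitrary RDF $f = (V_0; V_1; V_2)$, the defining property of an RDF says precisely that every vertex of $V_0$ has a neighbor in $V_2$, that is, $V_0 \subseteq B(V_2)$. Eliminating $|V_1|$ by means of $|V_0| + |V_1| + |V_2| = n$, the weight rewrites as
\[
  f(V(G)) = |V_1| + 2|V_2| = n - |V_0| + |V_2| = n - \bigl(|V_0| - |V_2|\bigr).
\]
Since $V_0 \subseteq B(V_2)$ yields $|V_0| - |V_2| \le |B(V_2)| - |V_2| = \partial(V_2) \le \partial(G)$, we get $f(V(G)) \ge n - \partial(G)$ for every RDF $f$, and hence $\gamma_R(G) \ge n - \partial(G)$.

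For the reverse inequality I would start from a $\partial$-set $S$ of $G$ and define $g$ by $V_2^g = S$, $V_0^g = B(S)$, and $V_1^g = V(G) \setminus (S \cup B(S))$. By the definition of $B(S)$, every vertex of $V_0^g$ has a neighbor in $S = V_2^g$, so $g$ is an RDF; and the displayed identity (which used only $|V_0| + |V_1| + |V_2| = n$) gives $g(V(G)) = n - (|B(S)| - |S|) = n - \partial(S) = n - \partial(G)$. Thus $\gamma_R(G) \le n - \partial(G)$, which together with the previous paragraph proves (i).

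Part (ii) then falls out by chasing the equality case of the chain of inequalities above. If $f = (V_0; V_1; V_2)$ is a $\gamma_R$-function, then $f(V(G)) = \gamma_R(G) = n - \partial(G)$ by (i), so $|V_0| - |V_2| = \partial(G)$; substituting this into $|V_0| - |V_2| \le \partial(V_2) \le \partial(G)$ forces $\partial(V_2) = \partial(G)$ (so $V_2$ is a $\partial$-set) and $|V_0| = |B(V_2)|$, which with $V_0 \subseteq B(V_2)$ gives $V_0 = B(V_2)$. Conversely, if $V_2$ is a $\partial$-set and $V_0 = B(V_2)$, the same identity gives $f(V(G)) = n - \partial(V_2) = n - \partial(G) = \gamma_R(G)$, so $f$ is a $\gamma_R$-function.

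I do not anticipate a real obstacle here; the only points needing care are keeping straight the two uses of the identity $f(V(G)) = n - (|V_0| - |V_2|)$ — once as a lower bound, valid for every RDF because $V_0 \subseteq B(V_2)$, and once as an exact evaluation for the function constructed from a $\partial$-set — and checking that this constructed function is genuinely an RDF, which is immediate from the definition of $B(S)$.
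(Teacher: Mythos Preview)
Your argument is correct. Note, however, that the paper does not itself prove this theorem: it is quoted from \cite{bfs} and stated without proof, so there is no ``paper's own proof'' to compare against. Your write-up is essentially the standard argument one finds in \cite{bfs}: the key identity $f(V(G)) = n - (|V_0| - |V_2|)$ together with the inclusion $V_0 \subseteq B(V_2)$ (forced by the RDF condition) gives one inequality, and building an RDF from a $\partial$-set gives the other; part (ii) is then exactly the equality analysis you carry out. The only cosmetic remark is that in the converse direction of (ii) you are already assuming $f$ is an RDF (as the statement does), so you need not re-verify that $V_0 = B(V_2)$ makes $f$ an RDF---you only need the weight computation, which you give.
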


\section{Upper bounds}

\begin{observation} \label{disc}
A graph $G$ is in $\mathcal{R}_{UVR}$ if and only if  all its components are also in $\mathcal{R}_{UVR}$.  
\end{observation}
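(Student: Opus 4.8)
The plan is to prove the two directions separately, using the obvious fact that the Roman domination number is additive over connected components, i.e.\ if $G = G_1 \cup G_2 \cup \dots \cup G_k$ is the decomposition of $G$ into its components, then $\gamma_R(G) = \sum_{i=1}^k \gamma_R(G_i)$. This additivity is immediate because an RDF on $G$ restricts to an RDF on each $G_i$, and conversely a choice of RDF on each $G_i$ glues to an RDF on $G$; minimizing the weight decouples across the components.

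First I would prove the ``if'' direction. Assume every component $G_i$ of $G$ lies in $\mathcal{R}_{UVR}$, and let $v \in V(G)$ be arbitrary. Then $v$ belongs to exactly one component, say $G_j$, and $G - v$ has components $G_1, \dots, G_{j-1}, G_j - v, G_{j+1}, \dots, G_k$. Applying additivity to $G$ and to $G - v$, and using $\gamma_R(G_j - v) = \gamma_R(G_j)$ (since $G_j \in \mathcal{R}_{UVR}$), we get $\gamma_R(G - v) = \gamma_R(G)$. As $v$ was arbitrary, $G \in \mathcal{R}_{UVR}$.

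For the ``only if'' direction, suppose $G \in \mathcal{R}_{UVR}$ and let $G_j$ be any component of $G$; I must show $G_j \in \mathcal{R}_{UVR}$, i.e.\ $\gamma_R(G_j - v) = \gamma_R(G_j)$ for every $v \in V(G_j)$. Fix such a $v$. By additivity, $\gamma_R(G) = \gamma_R(G_j) + \sum_{i \neq j} \gamma_R(G_i)$ and $\gamma_R(G - v) = \gamma_R(G_j - v) + \sum_{i \neq j} \gamma_R(G_i)$. Since $G \in \mathcal{R}_{UVR}$ gives $\gamma_R(G - v) = \gamma_R(G)$, subtracting the common sum $\sum_{i \neq j}\gamma_R(G_i)$ yields $\gamma_R(G_j - v) = \gamma_R(G_j)$.

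There is no real obstacle here; the only point requiring a line of care is the additivity claim itself, and whether $G_j - v$ might be disconnected or even empty. If $G_j - v$ is disconnected that is harmless, since additivity was stated for an arbitrary component decomposition; and $G_j - v = \emptyset$ only when $G_j = K_1$, in which case $G_j \notin \mathcal{R}_{UVR}$ is false in a degenerate way — however, $K_1$ is not in $\mathcal{R}_{UVR}$ under the usual reading (there is no vertex to delete leaving a nonempty graph, or equivalently $\gamma_R(K_1 - v) = 0 \neq 1 = \gamma_R(K_1)$), so such a component cannot occur on either side and the equivalence is unaffected. I would simply note that isolated-vertex components are excluded (or handled vacuously) and otherwise let the additivity of $\gamma_R$ over components do all the work.
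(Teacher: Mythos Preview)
Your argument is correct, and it is exactly the natural additivity argument one would expect; the paper itself states this observation without any proof, so there is nothing to compare beyond noting that what you wrote is the straightforward justification the authors left implicit. Your handling of a possible $K_1$ component is slightly over-elaborate: if some $G_j=K_1$ then deleting its vertex shows both that $G_j\notin\mathcal{R}_{UVR}$ and that $G\notin\mathcal{R}_{UVR}$, so the biconditional holds trivially in that case.
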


\begin{observation} \label{pn3}
Let a graph $G$ be in $\mathcal{R}_{UVR}$. Then $G$ is a Roman graph. 
If $f = (V_0^f; V_1^f; V_2^f)$ is a $\gamma_R$-function  on $G$
then $V_1^f = \emptyset$, $V_2^f$ is a $\gamma$-set of $G$
 and for any $v \in V_2^f$, $|pn[v,V_2^f]| \geq 3$. 
If $D$ is a $\gamma$-set of $G$ then $h=(V(G)-D;\emptyset;D)$ 
is a $\gamma_R$-function on $G$. 
\end{observation}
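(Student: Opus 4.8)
The plan is to prove Observation~\ref{pn3} in four stages, exploiting the characterization of $\mathcal{R}_{UVR}$ together with Lemma~\ref{minus} and Theorem~\ref{r}.

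First I would show $G$ is Roman. Pick any vertex $v\in V(G)$ and any $\gamma_R$-function $f=(V_0^f;V_1^f;V_2^f)$ on $G$. By hypothesis $\gamma_R(G-v)=\gamma_R(G)$, so in particular $\gamma_R(G-v)\not<\gamma_R(G)$; by the characterization in Lemma~\ref{minus} this forces $v\notin V_1^f$. Since $v$ was arbitrary and $f$ was an arbitrary $\gamma_R$-function, every $\gamma_R$-function on $G$ has $V_1^f=\emptyset$. By Theorem~\ref{r}, $G$ is Roman, i.e.\ $\gamma_R(G)=2\gamma(G)$.

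Next, fix a $\gamma_R$-function $f=(V_0^f;\emptyset;V_2^f)$ (using the step just proved). Then $f(V(G))=2|V_2^f|=\gamma_R(G)=2\gamma(G)$, so $|V_2^f|=\gamma(G)$; moreover $V_2^f$ must dominate $G$ because every vertex not in $V_2^f$ lies in $V_0^f$ and hence has a neighbor in $V_2^f$. Thus $V_2^f$ is a $\gamma$-set. For the private-neighbor claim, suppose some $v\in V_2^f$ has $|pn[v,V_2^f]|\le 2$. I would build a $\gamma_R$-function with a vertex in $V_1$, contradicting the first step. Concretely, consider reassigning $f(v):=1$ and, for each $w\in pn_G[v,V_2^f]\setminus\{v\}$ (there are at most one or two such $w$, all in $V_0^f$), setting the new label to $1$ as well; every such $w$ now has label $1$, every other vertex keeps its label and is still Roman-dominated because its label-$2$ neighbor was not $v$ (as it is not a private neighbor of $v$). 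The weight change is $-2+1+|pn_G[v,V_2^f]\setminus\{v\}|\le -2+1+1=0$, so we obtain an RDF of weight at most $\gamma_R(G)$, hence a $\gamma_R$-function, and it has $v\in V_1$ — contradiction. (Here the case $|pn[v,V_2^f]|\le 1$ is actually impossible once we also observe $v\in pn[v,V_2^f]$, since then $v$ is an isolated vertex of $\langle V_2^f\rangle$ with no private neighbor outside, making $V_2^f\setminus\{v\}\cup(\text{something})$ give a smaller configuration; but the uniform bound $\ge 3$ is what the argument above delivers.) I expect the bookkeeping in this reassignment — checking Roman-domination is preserved and the weight does not increase in both subcases ($0$, $1$, or $2$ private neighbors) — to be the main obstacle, though it is routine.

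Finally, for the converse direction: let $D$ be any $\gamma$-set of $G$ and set $h=(V(G)-D;\emptyset;D)$. Since $D$ dominates $G$, every vertex of $V(G)-D$ has a neighbor in $D$, so $h$ is an RDF of weight $2|D|=2\gamma(G)$. Because $G$ is Roman, $\gamma_R(G)=2\gamma(G)$, so $h$ attains the minimum weight and is a $\gamma_R$-function on $G$. This completes the proof.
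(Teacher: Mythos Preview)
Your overall structure matches the paper's proof exactly: use Lemma~\ref{minus} to get $V_1^f=\emptyset$, invoke Theorem~\ref{r} for Roman, read off that $V_2^f$ is a $\gamma$-set, then for the private-neighbor bound build an RDF with nonempty $V_1$-part and weight $\le\gamma_R(G)$ to force a contradiction. The final paragraph on $h$ is also the same.

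There is, however, a genuine gap in your private-neighbor argument. You relabel $v$ from $2$ to $1$ and each $w\in pn[v,V_2^f]\setminus\{v\}$ from $0$ to $1$, and then assert the weight change is
\[
-2+1+\bigl|pn[v,V_2^f]\setminus\{v\}\bigr|\le -2+1+1=0.
\]
The bound $\bigl|pn[v,V_2^f]\setminus\{v\}\bigr|\le 1$ is only valid when $v\in pn[v,V_2^f]$, and you never prove this (indeed it can fail: $V_2^f$ need not be independent in a $\gamma_R$-function with $V_1^f=\emptyset$; see the graph in Remark~\ref{e1} with the bridge endpoints chosen as $V_2^f$). In the remaining case $v\notin pn[v,V_2^f]$ and $|pn[v,V_2^f]|=2$, your construction gives weight change $+1$, so you do \emph{not} get a $\gamma_R$-function and no contradiction follows.

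The fix is exactly the case split the paper makes: when $v\notin pn[v,V_2^f]$, the vertex $v$ has a neighbour in $V_2^f\setminus\{v\}$, so you can safely relabel $v$ to $0$ (not $1$) and the two private neighbours to $1$; this yields weight change $-2+2=0$ and the private neighbours sit in $V_1$, giving the desired contradiction. Your version only covers the case $v\in pn[v,V_2^f]$ correctly.
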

\begin{proof}
As $G\in \mathcal{R}_{UVR}$, it follows by Lemma \ref{minus} that
 $V_1^g = \emptyset$ for any $\gamma_R$-function $g$ on $G$. 
Now Theorem \ref{r} implies a graph  G is Roman. 
 Let $f$ be a $\gamma_R$-function  on $G$. 
Since $V_2^f$ is a dominating set of $G$ and $\gamma(G) = \gamma_R(G)/2 = |V_2^f|$, 
$V_2^f$ is a $\gamma$-set of $G$. 
Assume $v \in V_2^f$ and  $|pn[v,V_2^f]| < 3$. 
If $v \not\in pn[v,V_2^f]$ then $l_1 = ((V_0^f - pn[v,V_2^f]) \cup \{v\}; pn[v,V_2^f];  V_2^f - \{v\})$ 
is an RDF on $G$ with weight at most $\gamma_R(G)$ and $V_1^g \not = \emptyset$, a contradiction. 
 If $v \in pn[v,V_2^f]$ then $l_2 = (V_0^f - pn[v,V_2^f]; pn[v,V_2^f];  V_2^f - \{v\})$ 
is an RDF on $G$ with weight not more $\gamma_R(G)$ and $V_1^g \not = \emptyset$, again a contradiction.
Thus, $|pn[v,V_2^f]| \geq 3$. 
 
Finally, the weight of $h$ is $2|D| = 2 \gamma(G) = \gamma_R(G)$ which shows that 
$h$   is a $\gamma_R$-function  on $G$. 
\end{proof}

\begin{remark}\label{e1}
Let  $f$ be a $\gamma_R$-function  on a graph $G \in \mathcal{R}_{UVR}$. 
If $v \in V_2^f$ then  $|pn[v,V_2^f]|$ can be arbitrarily large. 
Indeed, let us consider the graph  $G = (H_1 \cup H_2) +e $,  
where $H_1$ and $H_2$ are disjoint copies of $K_r$, $r \geq 4$.   
Clearly $\gamma_R(G) = 4$,  $G \in \mathcal{R}_{UVR}$ and 
if $x_i \in V(H_i)$, $i=1,2$, then  $f = (V(G) - \{x_1,x_2\}; \emptyset; \{x_1,x_2\})$ 
is a $\gamma_R$-function on $G$ and  $|pn[x_i,V_2^f]| \in \{r-1, r\}$.  
\end{remark}

It is easy to see that the following grahs are in $\mathcal{R}_{UV R}$: 
(a) $K_n$, $n \geq 3$; (b)  $K_{m,n}$ for $m \geq n \geq 4$, (c) $P_{3k}$ and $C_{3k}$, $k \geq 1$; (d) the cube and icosahedron.

Chambers et al. \cite{ck10} proved that if  $G$ is a graph with $\delta(G) \geq 1$  then 
$\gamma_R(G) \leq 4n/5$. For the graphs in $\mathcal{R}_{UVR}$ this bound can be lowered. 

\begin{proposition}\label{3v2}
Let $G \in \mathcal{R}_{UVR}$ be a connected graph of order $n$. 
 Then $\frac{2}{3}n \geq \gamma_R(G)$. 
If the equality holds then for any $\gamma_R$-function  $f$ on $G$, 
$V_2^f$ is an efficient dominating set of $G$ and each vertex of $V_2^f$  has degree $2$. 
If $G$ has an efficient dominating set $D$   and each vertex of $D$  has degree $2$ 
then $\frac{2}{3}n = \gamma_R(G)$. 
\end{proposition}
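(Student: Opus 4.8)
The plan is to read everything off Observation~\ref{pn3}. First I would fix an arbitrary $\gamma_R$-function $f=(V_0;V_1;V_2)$ on $G$; Observation~\ref{pn3} gives $V_1=\emptyset$, that $V_2$ is a $\gamma$-set of $G$ (so $|V_2|=\gamma(G)=\gamma_R(G)/2$), and that $|pn[v,V_2]|\ge 3$ for every $v\in V_2$. The next point is that the private neighbourhoods $pn[v,V_2]$, $v\in V_2$, are pairwise disjoint subsets of $V(G)$: if $y\in pn[v,V_2]\cap pn[w,V_2]$ then $\{v\}=N_G[y]\cap V_2=\{w\}$. Counting vertices then yields
\[
 n\ \ge\ \sum_{v\in V_2}|pn[v,V_2]|\ \ge\ 3|V_2|\ =\ 3\gamma(G)\ =\ \tfrac{3}{2}\gamma_R(G),
\]
i.e. $\gamma_R(G)\le\frac{2}{3}n$.

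For the equality case I would assume $\gamma_R(G)=\frac{2}{3}n$ and revisit the chain above for an arbitrary $\gamma_R$-function $f$: both inequalities must be equalities, which forces $|pn[v,V_2]|=3$ for every $v\in V_2$ \emph{and} $\bigcup_{v\in V_2}pn[v,V_2]=V(G)$. The second fact says that every vertex $z$ of $G$ satisfies $|N_G[z]\cap V_2|=1$. From this, efficiency of $V_2$ is immediate: $V_2$ is dominating, and if some $z$ lay in $N_G[u]\cap N_G[v]$ with $u\ne v$ in $V_2$ we would get $u,v\in N_G[z]\cap V_2$, a contradiction; hence $\{N_G[v]:v\in V_2\}$ partitions $V(G)$, i.e. $V_2$ is an efficient dominating set. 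Finally, for $v\in V_2$ the same count gives $pn[v,V_2]=N_G[v]$ (if $z\in N_G[v]$ then $N_G[z]\cap V_2=\{v\}$, so $z\in pn[v,V_2]$, and the reverse inclusion is trivial), whence $|N_G[v]|=3$ and $deg_G(v)=2$.

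For the converse, suppose $G$ has an efficient dominating set $D$ all of whose vertices have degree $2$. Then $\{N_G[v]:v\in D\}$ is a partition of $V(G)$ into blocks of size $3$, so $n=3|D|$, while $|D|=\gamma(G)$ by \cite{hhs1}; since $G\in\mathcal{R}_{UVR}$ is Roman by Observation~\ref{pn3} (Theorem~\ref{r}), $\gamma_R(G)=2\gamma(G)=2|D|=\frac{2}{3}n$. The double counting and the converse are routine; the step that needs care is the equality analysis — extracting from $\gamma_R(G)=\frac{2}{3}n$ both $|pn[v,V_2]|=3$ and $V(G)=\bigcup_{v\in V_2} pn[v,V_2]$ at once, and then correctly parlaying ``every vertex has a unique closed neighbour in $V_2$'' into efficiency and into the exact degree condition.
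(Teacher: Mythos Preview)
Your proof is correct and follows essentially the same approach as the paper: both derive the bound from Observation~\ref{pn3} by summing the disjoint private neighbourhoods $pn[v,V_2^f]$, each of size at least~$3$. The only cosmetic difference is bookkeeping: the paper subtracts $v$ and bounds $|V_0^f|\ge \sum_{v\in V_2^f}(|pn[v,V_2^f]|-1)\ge 2|V_2^f|$, whereas you bound $n\ge \sum_{v\in V_2^f}|pn[v,V_2^f]|\ge 3|V_2^f|$ directly; the equality analysis and the converse via Romanness then run identically.
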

\begin{proof}
Let   $f$ be any $\gamma_R$-function  on $G$. By  Observation \ref{pn3}, 
 $V_1^f = \emptyset$ and  $|pn[v,V_2^f]| \geq 3$ when $v \in V_2^f$. 
Hence 
\begin{equation}\label{eq1}
|V_0^f| = |\cup_{v \in V_2^f}(N(v)-V_2^f)|  \geq \Sigma_{v\in V_2^f} (|pn[v,V_2^f]|-1) \geq 2|V_2^f|  = \gamma_R(G).
\end{equation}
\indent  Therefore,  $n = |V_0^f| + |V_2^f| \geq \frac{3}{2}\gamma_R(G)$. 

Suppose $n = \frac{3}{2}\gamma_R(G)$.  Then all the above inequalities must be equalities.
If equality holds on the left side of \eqref{eq1} then  $N[v] =  pn[v,V_2^f]$ for all $v \in V_2^f$,  
which implies $V_2^f$ is an efficient dominating set in $G$. 
If in addition,  the right side of \eqref{eq1}  becomes equality then $|N(v) |=2$ for each $v \in V_2^f$. 

Assume now that $D$ is   an efficient dominating set of $G$ 
and all vertices of $D$ have degree $2$.  Hence  $n = 3|D|$.  
Since $G$ is Roman and each efficient dominating set is a $\gamma$-set 
$\gamma_R(G) = 2\gamma(G) = 2|D| = \frac{2}{3}n$ as required. 
\end{proof}

The bound  in Proposition \ref{3v2} is tight at least for all cycles $C_{3k}$, $k \geq 1$. 
 In the next section we present a constructive characterization of all trees $T$
  with $|V(T)| = \frac{3}{2}\gamma_R(T)$.

 Jafari Rad and Volkmann in \cite{rv0}  proved that
 $b_R(G) \leq deg(x) + deg(y) + deg(z) - |N(x) \cap N(y)| - 3$ for any path $P: x,y,z$ in a graph $G$. 
 For all graphs $G$ belonging to $\mathcal{R}_{UVR}$, this bound can be improved to $\delta(G)$.

\begin{proposition}\label{02}
Let $G$ be a graph and $v \in V(G)$. 
If for any $\gamma_R$-function $f$ on $G$, $f(v) \not = 1$ then $\gamma_R (G-E_v) > \gamma_R (G)$, 
where $E_v$ is the set of all edges incident to a vertex $v$. 
In particular, $b_R(G) \leq deg(v,G) \leq \Delta(G)$. 
\end{proposition}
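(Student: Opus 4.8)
The plan is to show directly that removing all edges at $v$ strictly increases the Roman domination number, and then read off the bound on $b_R$. First I would argue by contradiction: suppose $\gamma_R(G-E_v)\le\gamma_R(G)$; by Lemma~\ref{minuse} (applied repeatedly, one edge at a time, to the graph $G$ with successively more edges deleted) we in fact have $\gamma_R(G-E_v)\ge\gamma_R(G)$, so equality holds, i.e.\ $\gamma_R(G-E_v)=\gamma_R(G)$. In $G-E_v$ the vertex $v$ is isolated, so any $\gamma_R$-function $g$ on $G-E_v$ must satisfy $g(v)=1$ (an isolated vertex cannot have label $0$, and relabelling it from $2$ to $1$ would only decrease the weight, contradicting minimality — so a $\gamma_R$-function has $g(v)=1$).

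Next I would transport $g$ back to $G$. Since $G-E_v$ is a spanning subgraph of $G$, the same function $g$ is still an RDF on $G$: every vertex labelled $0$ in $G-E_v$ has a neighbour labelled $2$ in $G-E_v$, and that adjacency persists in $G$. Hence $g$ is an RDF on $G$ of weight $\gamma_R(G-E_v)=\gamma_R(G)$, so $g$ is a $\gamma_R$-function on $G$ with $g(v)=1$. This contradicts the hypothesis that $f(v)\ne1$ for every $\gamma_R$-function $f$ on $G$. Therefore $\gamma_R(G-E_v)>\gamma_R(G)$.

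Finally, for the bondage bound: by definition $E_v$ is a set of $\deg(v,G)$ edges whose removal strictly increases $\gamma_R$, so $b_R(G)\le|E_v|=\deg(v,G)\le\Delta(G)$. (One should note the standing assumption that $\Delta(G)\ge2$ needed for $b_R$ to be defined; if $\deg(v,G)\le1$ the statement $\gamma_R(G-E_v)>\gamma_R(G)$ is still the substantive content and the $b_R$ clause is read in the cases where it makes sense.)

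The only genuinely delicate point is the claim that a $\gamma_R$-function $g$ on $G-E_v$ must assign $g(v)=1$: this uses that $v$ is isolated in $G-E_v$ together with minimality of weight, and it is the hinge that converts "the weight didn't go up" into "there is a $\gamma_R$-function of $G$ with a $1$ at $v$." Everything else is the monotonicity Lemma~\ref{minuse} and the trivial observation that an RDF on a spanning subgraph is an RDF on the supergraph. I expect no real obstacle beyond stating these steps cleanly.
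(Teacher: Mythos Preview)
Your proof is correct and follows essentially the same approach as the paper's: use Lemma~\ref{minuse} to get $\gamma_R(G-E_v)\ge\gamma_R(G)$, observe that any $\gamma_R$-function $g$ on $G-E_v$ has $g(v)=1$ (since $v$ is isolated) and is an RDF on $G$, then invoke the hypothesis to conclude strict inequality. The only difference is cosmetic---you frame it as a contradiction from assumed equality, while the paper argues directly that $g$ cannot be a $\gamma_R$-function on $G$ and hence has weight strictly exceeding $\gamma_R(G)$.
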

\begin{proof}
By Lemma \ref{minuse}, $\gamma_R (G-E_v) \geq \gamma_R (G)$. 
Consider any $\gamma_R$-function $g$ on $G-E_v$. 
Clearly $g$ is an RDF on $G$. Since $v$ is an isolated vertex in $G-E_v$, 
 $g(v)=1$. But then  $g$ is no  $\gamma_R$-function on $G$. 
Thus $\gamma_R (G-E_v) > \gamma_R (G)$ which implies $b_R(G) \leq deg(v,G) \leq \Delta(G)$. 
 \end{proof}

\begin{corollary}\label{uvrbon}
If a graph $G$ is in $\mathcal{R}_{UVR}$  then $b_R(G) \leq \delta(G)$.
\end{corollary}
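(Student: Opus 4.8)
The plan is to combine Proposition \ref{02} with the structural description of $\gamma_R$-functions on graphs in $\mathcal{R}_{UVR}$ supplied by Observation \ref{pn3}. First I would recall that, by Observation \ref{pn3}, every $\gamma_R$-function $f$ on a graph $G \in \mathcal{R}_{UVR}$ satisfies $V_1^f = \emptyset$; equivalently, no $\gamma_R$-function assigns the value $1$ to any vertex. Hence for an arbitrary but fixed vertex $v \in V(G)$ the hypothesis of Proposition \ref{02} is met: for every $\gamma_R$-function $f$ on $G$ we have $f(v) \neq 1$.

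Applying Proposition \ref{02} to this $v$ yields $b_R(G) \leq \deg(v, G)$. Since $v$ was arbitrary, this inequality holds for every vertex of $G$, so we may choose $v$ to be a vertex of minimum degree, giving $b_R(G) \leq \delta(G)$, as claimed. One should note the implicit standing assumption in the definition of $b_R$ that $\Delta(G) \geq 2$, which is automatically available here since a graph in $\mathcal{R}_{UVR}$ with $\Delta(G) \leq 1$ would have to be a disjoint union of edges and isolated vertices, and such graphs are not in $\mathcal{R}_{UVR}$ (deleting an endpoint of an edge changes $\gamma_R$); by Observation \ref{disc} it suffices to check this for a single component.

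There is essentially no obstacle: the entire content is already packaged in Proposition \ref{02} and Observation \ref{pn3}, and the corollary is just the specialization of the former using the structural fact from the latter, optimized over the choice of vertex. The only point requiring a moment's care is the edge case $\Delta(G) \leq 1$ mentioned above, but this is a triviality rather than a genuine difficulty, and in any case it is consistent with the convention under which $b_R$ is defined. I would therefore write the proof as a two-line deduction.

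\begin{proof}
Let $v$ be a vertex of $G$ with $\deg(v, G) = \delta(G)$. By Observation \ref{pn3}, $V_1^f = \emptyset$ for every $\gamma_R$-function $f$ on $G$; in particular $f(v) \neq 1$ for every such $f$. Proposition \ref{02} now gives $b_R(G) \leq \deg(v, G) = \delta(G)$.
\end{proof}
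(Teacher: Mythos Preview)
Your proof is correct and matches the paper's own argument: apply Proposition \ref{02} to a vertex of minimum degree, using Observation \ref{pn3} to ensure that no $\gamma_R$-function assigns it the value $1$. The paper's proof is a one-liner doing exactly this; your additional remarks about the $\Delta(G)\le 1$ edge case are sound but not needed.
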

\begin{proof}
By applying Proposition \ref{02} to the graph $G$ and a vertex $v$ of minimum degree we obtain the result. 
\end{proof}

The bound stated in Corollary \ref{uvrbon} is tight. For example  when 
(a) $G = C_{3k}$, $k \geq 1$, and (b) $\delta(G) = 1$. As an immediate consequence we obtain:

\begin{corollary}\label{uvrtree}
For any tree $T$ in $\mathcal{R}_{UVR}$, $b_R(T) =1$.
\end{corollary}

Note that for a tree $T$ of order at least three  Ebadi and PushpaLatha \cite{epl}, 
and Jafari Rad and Volkmann \cite{rv0},  independently proved that $b_R(T) \leq 3$.

\section{Small number of edges}

In this section we give necessary and sufficient conditions for a tree to be in $\mathcal{R}_{UVR}$.
In particular we present here a constructive characterization of $\mathcal{R}_{UVR}$-trees 
using labellings. We define a labeling of a tree $T$ as a function $S:V (T ) \rightarrow\{A,B,C\}$. 
 The label of a vertex $v$ is also called its status, denoted $sta_T(v)$. 
A labeled tree is denoted by a pair $(T , S)$. We denote the sets of vertices of status $A$, $B$ and $C$ 
 by $S_A(T )$, $S_B(T)$ and $S_C(T )$, respectively, or simply by $S_A$, $S_B$ and $S_C$ 
if the tree $T$ is clear from context. 
By a labeled $K_{1,2}$ we shall mean a copy of $K_{1,2}$ whose leaves have status 
 $A$ and the status of  the central vertex is $B$.

 Let $\mathscr{T}$ be the family of labeled trees $(T,S)$ that can be obtained from a sequence 
$(T_1,S_1), \dots, (T_j,S_j)$, ($j \geq 1$), of labeled trees such that
 $(T_1,S_1)$ is a labeled $K_{1,2}$ and $(T,S) = (T_j,S_j)$, 
and, if $j \geq 2$,  $(T_{i+1},S_{i+1})$ can be obtained recursively from $(T_i,S_i)$ 
 by one of the four operations $O1$, $O2$, $O3$ and $O4$ listed below.

{\bf Operation}  {\em O1}.  The labeled tree $(T_{i+1},S_{i+1})$ 
is obtained from $(T_i,S_i)$ by adding a path $x,y,z$ and the
edge $ux$ where $u \in V(T_i)$ and $sta(u) \in \{A, C\}$, 
and letting $sta(x)=sta(z) = A$  and $sta(y)= B$.

{\bf Operation}  {\em O2}.   The labeled tree $(T_{i+1},S_{i+1})$ 
 is obtained from $(T_i,S_i)$ by adding a star with  leaves $x,z,t$ 
and a central vertex $y$, and then adding the edge $ux$ where $u \in V(T_i)$
 and $sta(u)=B$, and letting $sta(x)=C$, $sta(z) = sta(t) = A$  and $sta(y)= B$.

\begin{figure}[htbp]
	\centering
		\includegraphics{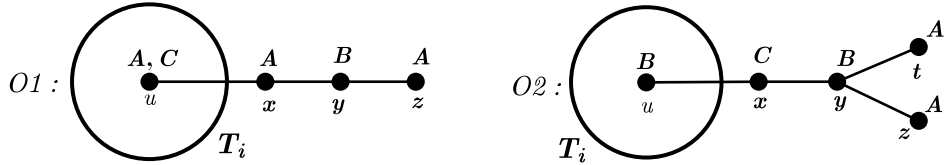}
	\caption{Operations $O1$ and $O2$}
	\label{fig:O12}
\end{figure}

{\bf Operation}  {\em O3}.  The  labeled tree $(T_{i+1},S_{i+1})$
 is obtained from $(T_i,S_i)$ by adding a path $x,y,z$ and the
edge $uy$ where $u \in V(T_i)$ and $sta(u) =C$, and letting $sta(x)=sta(z) = A$  and $sta(y)= B$.

By a labeled tree $R$ we shall mean a labeled tree obtained from  a labeled $K_{1,2}$ by operation $O2$.

{\bf Operation}  {\em O4}. The labeled tree $(T_{i+1},S_{i+1})$ 
is obtained from $(T_i,S_i)$ by adding a labeled $R$ and the
edge $ux$ where $u \in V(T_i)$ and $sta(u) \in \{A,C\}$, and $x \in V(R)$ with  $sta(x)=C$.

\begin{figure}[htbp]
	\centering
		\includegraphics{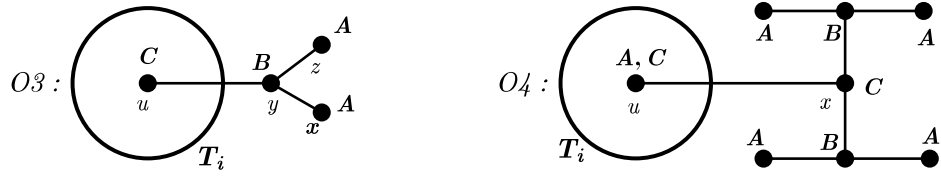}
	\caption{Operations $O3$ and $O4$}
	\label{fig:O34}
\end{figure}

Remark that once a vertex is assigned a status, 
this status remains unchanged as the labeled tree $(T,S)$ is recursively constructed. 

\begin{observation} \label{sabc}
Let $(T,S)$ be in $\mathscr{T}$. Then 
\begin{itemize}
\item[(i)] $S_B$ is an independent dominating set and for each vertex $v$ in $S_B$, 
 $|N(v) \cap S_A| =2$ and $pn[v,S_B] = (N(v) \cap S_A) \cup\{v\}$. 
\item[(ii)] Each vertex in $S_A$ is adjacent to exactly one vertex in $S_B$ and $|S_A| = 2|S_B|$. 
\item[(iii)] Each vertex in $S_C$ is adjacent to at least $2$ vertices in $S_B$.
\item[(iv)] $S_B$ is the unique $\gamma$-set of $T$.
\end{itemize}
\end{observation}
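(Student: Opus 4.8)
The plan is to prove the four statements by induction on $j$, the length of the construction sequence for $(T,S)\in\mathscr{T}$. For the base case, $(T_1,S_1)$ is a labeled $K_{1,2}$: here $S_B$ is the singleton consisting of the center, $S_A$ the two leaves, $S_C=\emptyset$, and all four items are immediate ($S_B$ is clearly independent and dominating, the center has exactly two neighbors in $S_A$ and no other private neighbors, $|S_A|=2=2|S_B|$, vacuously every $S_C$-vertex has two $S_B$-neighbors, and $\{center\}$ is plainly the unique $\gamma$-set of $P_3$). Then I would assume all four statements hold for $(T_i,S_i)$ and check that each of the four operations $O1$--$O4$ preserves them when passing to $(T_{i+1},S_{i+1})$.

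The inductive step splits into four cases, one per operation, and in each case I would track exactly which vertices and statuses are added. For $O1$ and $O3$, a path $x,y,z$ with $sta(x)=sta(z)=A$, $sta(y)=B$ is attached (to a vertex $u$ of status $A$ or $C$, via $ux$ in $O1$, via $uy$ in $O3$ to a vertex $u$ of status $C$). In both cases $S_B$ gains $y$, $S_A$ gains $\{x,z\}$, $S_C$ is unchanged; so (ii) is maintained, and for (i) I must check $y$ is not adjacent to any old $S_B$-vertex (true, its only neighbors in $T_{i+1}$ other than $x,z$ is $u$ in $O3$, which has status $C$; in $O1$, $y$'s only neighbors are $x,z$) and that $N(y)\cap S_A=\{x,z\}$ with $pn[y,S_B]=\{x,y,z\}$ — here I use that $x$ and $z$ are new leaves whose only neighbor is $y$, and for $O3$ that $u\in S_C$ so $u\notin S_B$ and $u$ does not become a private neighbor of $y$. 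For (iii) in $O3$, the vertex $u$ of status $C$ gains $y$ as a new $S_B$-neighbor, so its count of $S_B$-neighbors only goes up; in $O1$ no $S_C$-vertex changes its neighborhood among $S_B$. For $O2$, a star with center $y$ (status $B$), leaves $x$ (status $C$), $z,t$ (status $A$) is attached via $ux$ to a vertex $u$ of status $B$: now $S_B$ gains $y$, $S_A$ gains $\{z,t\}$, $S_C$ gains $x$; I check $y\notin N[\text{old }S_B]$ (its neighbors are $x,z,t$, all new), $N(y)\cap S_A=\{z,t\}$, $pn[y,S_B]=\{y,z,t\}$ (note $x\in S_C$ is dominated by $y$ but also by the old neighbor $u\in S_B$, so $x$ is not a private neighbor of $y$, which is exactly why $x$ gets status $C$ and why it has $\ge 2$ $S_B$-neighbors, namely $u$ and $y$), $|S_A|$ grows by $2$ while $|S_B|$ grows by $1$, and the old $S_B$-vertex $u$ keeps its private-neighbor structure since its new neighbor $x$ has status $C$. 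For $O4$, a labeled $R$ (itself a $K_{1,2}$ grown by one application of $O2$, so it has one $C$-vertex, say $x$, and internal structure already satisfying (i)--(iv) by the $O2$ analysis) is attached via $ux$ to $u$ of status $A$ or $C$: the only new edge touches $x\in S_C$, so no $S_B$-vertex of $R$ or of $T_i$ changes its neighborhood among $S_A\cup S_B$, (i) and (ii) are just the union of the two pieces, and for (iii) the vertex $x$ retains its $\ge 2$ $S_B$-neighbors inside $R$ while $u$, if in $S_C$, only gains nothing in $S_B$ (its new neighbor $x$ has status $C$).

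Finally, for part (iv) I would deduce it from (i) once the structure is in place, rather than re-proving it operation by operation from scratch. By (i), $S_B$ is a dominating set, and for each $v\in S_B$ the set $pn[v,S_B]=(N(v)\cap S_A)\cup\{v\}$ contains the two distinct $S_A$-vertices adjacent to $v$; since those two are leaves adjacent only to $v$ (for an $O1$/$O3$-created $v$) or are the two $A$-leaves of the attached star (for an $O2$-created $v$ or the $v$ inside a labeled $R$), they are non-adjacent to each other. Hence every vertex of the dominating set $S_B$ has at least two non-adjacent $S_B$-private neighbors, and since $|V(T)|=|S_A|+|S_B|+|S_C|\ge 3$ (already true for $K_{1,2}$ and only increasing), Theorem~\ref{un} applies and gives that $S_B$ is the unique $\gamma$-set of $T$.

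The main obstacle I anticipate is the bookkeeping in part (i) for operations $O2$ and $O4$: I must be careful that attaching a new piece at a $C$-vertex, or attaching a star whose leaf $x$ becomes a $C$-vertex adjacent to an old $B$-vertex $u$, does not spoil the private-neighborhood equality $pn[u,S_B]=(N(u)\cap S_A)\cup\{u\}$ for that old $u$ — the point being precisely that the newly attached neighbor of $u$ has status $C$ (hence lies outside $S_A$) and is already dominated by $u$ without being private to it, so it was correctly excluded from $pn[u,S_B]$ by the inductive hypothesis and remains excluded. Everything else is a routine but lengthy verification that the counts $|S_A|=2|S_B|$ and the adjacency conditions are additive over the operation.
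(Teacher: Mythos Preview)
Your approach is essentially the same as the paper's: the paper dispatches (i)--(iii) with the single line ``By the definition of $(T,S)$'' (i.e., by induction on the construction sequence, exactly as you spell out) and then derives (iv) from (i) via Theorem~\ref{un}, just as you do. Your detailed case analysis for each operation is correct.

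Two small remarks. First, in your justification of (iv) you claim the two $S_A$-neighbors of $v\in S_B$ are ``leaves adjacent only to $v$''; this is not literally true (e.g.\ in $O1$ the vertex $x$ is adjacent to both $y$ and $u$), but the needed conclusion that they are nonadjacent to each other is automatic because $T$ is a tree (two common neighbors of $v$ cannot be adjacent without creating a triangle). Second, in your ``main obstacle'' paragraph you write that the new neighbor $x$ of the old $u\in S_B$ in $O2$ ``was correctly excluded from $pn[u,S_B]$ by the inductive hypothesis''; since $x$ is a new vertex this phrasing is off---the actual reason $x\notin pn[u,S_B]$ in $T_{i+1}$ is that $x$ is also adjacent to the new vertex $y\in S_B$. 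Neither point affects the validity of your argument.
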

\begin{proof}
(i)--(iii) By the definition of $(T,S)$.

(iv) Theorem \ref{un} and (i) together imply the result.
\end{proof}

\begin{corollary}\label{unilab}
If $(T,S_1),(T,S_2)\in \mathscr{T}$ then $S_1 \equiv S_2$.
\end{corollary}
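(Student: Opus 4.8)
The plan is to show that a labeled tree $(T,S)\in\mathscr{T}$ determines its labeling uniquely, i.e. that the status of each vertex is forced by the underlying tree $T$ together with the requirement $(T,S)\in\mathscr{T}$. The key observation, supplied by Observation \ref{sabc}(iv), is that $S_B$ is the \emph{unique} $\gamma$-set of $T$; since $S_1$ and $S_2$ are both labelings of the \emph{same} tree $T$ with $(T,S_1),(T,S_2)\in\mathscr{T}$, we immediately get $S_{1,B}=S_{2,B}$, so the $B$-vertices agree. It then remains to show that the partition of $V(T)\setminus S_B$ into $A$-vertices and $C$-vertices is also forced by $T$ and $S_B$ alone.

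First I would record, directly from Observation \ref{sabc}, a purely graph-theoretic description separating $S_A$ from $S_C$. By part (i), every $v\in S_B$ has exactly two neighbors in $S_A$ and these, together with $v$, form $pn[v,S_B]$; by part (ii), each vertex of $S_A$ has exactly one neighbor in $S_B$; and by part (iii), each vertex of $S_C$ has at least two neighbors in $S_B$. Consequently a vertex $w\in V(T)\setminus S_B$ lies in $S_A$ if and only if $|N_T(w)\cap S_B|=1$, and lies in $S_C$ if and only if $|N_T(w)\cap S_B|\ge 2$. (One should double-check that no $A$-vertex can accidentally acquire a second $S_B$-neighbor and no $C$-vertex can be left with only one: this follows because in every operation $O1$--$O4$ a newly added vertex of status $A$ is attached only to the central $B$-vertex of its own gadget, while the edge $ux$ joining the new gadget to $T_i$ always lands on a vertex $u$ of status $A$ or $C$, never $B$, so the degrees into $S_B$ behave as claimed throughout the construction.)

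With this characterization in hand the corollary is immediate: for both $i=1$ and $i=2$ the set $S_{i,B}$ equals the unique $\gamma$-set $D$ of $T$, and then
\[
S_{i,A}=\{\,w\in V(T)\setminus D : |N_T(w)\cap D|=1\,\},\qquad
S_{i,C}=\{\,w\in V(T)\setminus D : |N_T(w)\cap D|\ge 2\,\},
\]
neither of which depends on $i$. Hence $S_1\equiv S_2$.

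I expect the only real work to be the verification embedded in the second paragraph — namely that the three bullet points of Observation \ref{sabc} genuinely \emph{characterize} membership in $S_A$ versus $S_C$ by the count of $S_B$-neighbors, rather than merely being consequences of it. This amounts to an induction along the construction sequence $(T_1,S_1),\dots,(T_j,S_j)$ checking that each operation preserves the invariant "$w$ has exactly one $S_B$-neighbor $\iff w\in S_A$" for all vertices $w\notin S_B$; the potentially delicate cases are $O3$ and $O4$, where the connecting edge is incident to a $C$-vertex, and one must confirm this never turns an $A$-vertex into one with two $S_B$-neighbors. Everything else is bookkeeping, and the uniqueness of the $\gamma$-set does the heavy lifting via Theorem \ref{un}.
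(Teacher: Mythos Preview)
Your proposal is correct and follows exactly the approach the paper has in mind: the paper's entire proof is ``Immediately by Observation \ref{sabc}'', and you have correctly unpacked that---use part (iv) to pin down $S_B$ as the unique $\gamma$-set, then use parts (ii) and (iii) to separate $S_A$ from $S_C$ by the number of $S_B$-neighbors. One minor point: the ``real work'' you anticipate in your final paragraph is unnecessary, since Observation \ref{sabc}(ii) already \emph{asserts} (not merely implies) that each $A$-vertex has exactly one $S_B$-neighbor, and (iii) asserts each $C$-vertex has at least two; you may simply cite these and skip the inductive re-verification along the construction sequence.
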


\begin{proof}
Immediately by Observation \ref{sabc}. 
\end{proof}

Let $(T,S)\in \mathscr{T}$. By the above corollary, $S$ is unique. 
So, when the context is clear we shall write $T \in \mathscr{T}$ instead of $(T,S)\in \mathscr{T}$.
 
We define the following classes of graphs: 
\begin{itemize}
 \item[$\bullet$] $\partial_{CV R}$ is the class of graphs $G$ such that 
                    $\partial(G-v) \not= \partial(G)$ for all $v \in V (G)$, and
	\item[$\bullet$] $\partial_{UV R}$ is the class of graphs $G$ such that 
                    $\partial(G-v) = \partial(G)$ for all $v \in V (G)$.
\end{itemize}

By Theorem \ref{diff} it immediately follows the next observation. 
\begin{observation}\label{equi}
$\mathcal{R}_{UVR} = \partial_{UVR}$ and $\mathcal{R}_{CVR} = \partial_{CVR}$.
\end{observation}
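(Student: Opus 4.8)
The claim to establish is Observation~\ref{equi}, namely that $\mathcal{R}_{UVR} = \partial_{UVR}$ and $\mathcal{R}_{CVR} = \partial_{CVR}$. The plan is to reduce everything to the identity in Theorem~\ref{diff}(i), which asserts $\gamma_R(H) + \partial(H) = |V(H)|$ for every graph $H$. First I would fix a graph $G$ and an arbitrary vertex $v \in V(G)$, and apply this identity twice: once to $H = G$, giving $\gamma_R(G) = |V(G)| - \partial(G)$, and once to $H = G - v$, giving $\gamma_R(G-v) = |V(G)| - 1 - \partial(G-v)$, since $|V(G-v)| = |V(G)| - 1$. Subtracting, one obtains the key relation
\begin{equation*}
\gamma_R(G) - \gamma_R(G-v) = 1 + \partial(G-v) - \partial(G),
\end{equation*}
valid for every vertex $v$.

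From this relation the two set equalities follow by unwinding definitions. For the first: $G \in \mathcal{R}_{UVR}$ means $\gamma_R(G-v) = \gamma_R(G)$ for all $v$, i.e.\ the left-hand side above vanishes for all $v$, which by the relation is equivalent to $\partial(G-v) = \partial(G)$ for all $v$, i.e.\ $G \in \partial_{UVR}$; hence $\mathcal{R}_{UVR} = \partial_{UVR}$. For the second: $G \in \mathcal{R}_{CVR}$ means $\gamma_R(G-v) \neq \gamma_R(G)$ for all $v$, i.e.\ the left-hand side is nonzero for all $v$, which is equivalent to $\partial(G-v) \neq \partial(G)$ for all $v$, i.e.\ $G \in \partial_{CVR}$; hence $\mathcal{R}_{CVR} = \partial_{CVR}$.

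I do not expect any serious obstacle here, since the statement is essentially a bookkeeping consequence of Theorem~\ref{diff}(i): the only point requiring a moment's care is that the identity must be applied to the graph $G - v$, whose order is one less than that of $G$, so that the ``$-1$'' appears and correctly produces the ``$+1$'' in the displayed relation. One should also note implicitly that $G-v$ is a legitimate graph to which Theorem~\ref{diff} applies (it has a well-defined Roman domination number and differential), which is immediate. This is why the excerpt calls it an observation rather than a theorem, and the proof it anticipates is the one-line remark that it ``immediately follows'' from Theorem~\ref{diff}.
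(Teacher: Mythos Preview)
Your derivation of the relation
\[
\gamma_R(G) - \gamma_R(G-v) \;=\; 1 + \partial(G-v) - \partial(G)
\]
from Theorem~\ref{diff}(i) is correct, and this is precisely the content the paper intends when it says the observation ``immediately follows'' from Theorem~\ref{diff}. The gap is in your final step: setting the left-hand side equal to zero yields $\partial(G-v) = \partial(G) - 1$, \emph{not} $\partial(G-v) = \partial(G)$. Likewise, the left-hand side being nonzero is equivalent to $\partial(G-v) \neq \partial(G) - 1$, not to $\partial(G-v) \neq \partial(G)$. So the equivalences you announce do not follow from the relation you (correctly) obtained.

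This is not a defect of your method; it exposes that the observation, with $\partial_{UVR}$ and $\partial_{CVR}$ defined exactly as in the paper, is false. For instance $K_3 \in \mathcal{R}_{UVR}$ since $\gamma_R(K_3) = \gamma_R(K_2) = 2$, yet $\partial(K_3) = 1 \neq 0 = \partial(K_2)$, so $K_3 \notin \partial_{UVR}$. Similarly $K_2 \in \mathcal{R}_{CVR}$ since $\gamma_R(K_1) = 1 \neq 2 = \gamma_R(K_2)$, but $\partial(K_2) = \partial(K_1) = 0$, so $K_2 \notin \partial_{CVR}$. What Theorem~\ref{diff}(i) actually gives is that $G \in \mathcal{R}_{UVR}$ iff $\partial(G-v) = \partial(G) - 1$ for every $v$, and $G \in \mathcal{R}_{CVR}$ iff $\partial(G-v) \neq \partial(G) - 1$ for every $v$; the intended definitions of $\partial_{UVR}$ and $\partial_{CVR}$ should presumably be adjusted accordingly.
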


\begin{theorem}\label{main}
For any tree $T$ of order at least three the following assertions  are equivalent.
\begin{itemize}
\item[(i)] $T$ is in  $\mathscr{T}$.
\item[(ii)] $T$ is in $\mathcal{R}_{UVR}$.
\item[(iii)] $T$ has a unique $\gamma_R$-function, say $f$,  and  all the following holds: 
                   $V_1^f = \emptyset$, $V_2^f$ is independent and $|pn[v,V_2^f]| = 3$ for each $v \in V_2^f$.
\item[(iv)] $T$ has a unique $\gamma$-set $D$, $D$ is independent and $|pn[v,D]| = 3$ for each $v \in D$.
\item[(v)] $T$ is in $\partial_{UVR}$.
\end{itemize}
\end{theorem}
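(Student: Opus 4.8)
The plan is to prove the cycle of implications $(i)\Rightarrow(iii)\Rightarrow(iv)\Rightarrow(ii)\Rightarrow(i)$, together with the trivial equivalence $(ii)\Leftrightarrow(v)$ which is immediate from Observation \ref{equi}. First I would establish $(i)\Rightarrow(iii)$: given $T\in\mathscr{T}$, Observation \ref{sabc} already tells us that $S_B$ is the unique $\gamma$-set, that it is independent, and that $pn[v,S_B]=(N(v)\cap S_A)\cup\{v\}$ has exactly three elements for each $v\in S_B$. Setting $f=(V(T)-S_B;\emptyset;S_B)$ gives an RDF of weight $2|S_B|=2\gamma(T)$, so $\gamma_R(T)\le 2\gamma(T)$, and combined with \eqref{eqq} we get $\gamma_R(T)=2\gamma(T)$ and that $f$ is a $\gamma_R$-function. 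For uniqueness of the $\gamma_R$-function one argues that any $\gamma_R$-function $g$ must have $V_1^g=\emptyset$ (otherwise, using that any $\gamma_R$-function with a label-$1$ vertex would contradict $\gamma_R=2\gamma$ via Theorem \ref{r}-type reasoning, or more directly: if $V_1^g\neq\emptyset$ then $V_2^g$ together with one vertex of each $V_1^g$-component is a dominating set of size $<|V_2^g|+|V_1^g|/2\cdot 2$... ) — actually the clean route is: $V_2^g$ is a dominating set with $|V_2^g|\le \gamma_R(T)/2=\gamma(T)$, forcing $V_2^g$ to be a $\gamma$-set, hence $V_2^g=S_B$ by uniqueness, and then $|V_1^g|=\gamma_R(T)-2|S_B|=0$, so $g=f$.

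Next, $(iii)\Rightarrow(iv)$ is essentially a translation: if $f$ is the unique $\gamma_R$-function with $V_1^f=\emptyset$, then $D:=V_2^f$ satisfies $\gamma_R(T)=2|D|$, and since $\gamma(T)\le\gamma_R(T)/2=|D|$ while $D$ dominates $T$, $D$ is a $\gamma$-set; the properties that $D$ is independent and $|pn[v,D]|=3$ carry over directly. Uniqueness of $D$ follows because any $\gamma$-set $D'$ yields a $\gamma_R$-function $(V(T)-D';\emptyset;D')$ of weight $2\gamma(T)=\gamma_R(T)$, which by uniqueness of $f$ must equal $f$, so $D'=D$. For $(iv)\Rightarrow(ii)$: let $D$ be the unique $\gamma$-set, independent, with $|pn[v,D]|=3$ for all $v\in D$. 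Then $|V(T)|=\sum_{v\in D}|pn[v,D]|=3|D|$ using that $D$ independent and the private neighborhoods partition $V(T)$ (each vertex outside $D$ is dominated, and if it had two neighbors in $D$ it would lie in no private neighborhood, but then $D$ minus... needs care — actually $|pn[v,D]|=3$ with $v\in pn[v,D]$ since $D$ independent means $v$ has two external private neighbors, and Theorem \ref{un} gives uniqueness consistency). To show $T\in\mathcal{R}_{UVR}$, fix any vertex $w$ and show $\gamma_R(T-w)=\gamma_R(T)$; by Lemma \ref{minuse}-type monotonicity one direction needs a construction of an RDF on $T-w$ of weight $\gamma_R(T)$, obtained by modifying $f=(V(T)-D;\emptyset;D)$: if $w\notin D$, $f|_{T-w}$ still works (the other two private neighbors of $w$'s dominator remain, one of them keeps it "justified"); if $w\in D$, reassign $w$'s role — its two external private neighbors $x,z$ get relabeled, e.g. promote $x$ to $2$ if it can dominate $z$, or use label $1$ on both, keeping weight $2|D|$. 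One must check $\gamma_R(T-w)\ge\gamma_R(T)$ too, i.e. it cannot drop; Lemma \ref{minus} says a drop forces a $\gamma_R$-function with $w\in V_1^f$, contradicting $V_1^f=\emptyset$ from $(iv)$ (via the $(iv)\Rightarrow(iii)$ direction or directly).

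Finally, the hard direction $(ii)\Rightarrow(i)$ requires a constructive/inductive argument on $|V(T)|$. The base case $|V(T)|=3$ forces $T=P_3=K_{1,2}$; one checks $\gamma_R(P_3)=2$ and $\gamma_R(P_3-v)=2$ for every $v$, and the labeled $K_{1,2}$ is in $\mathscr{T}$ by definition, with the required status assignment being forced by Observation \ref{sabc}. For the inductive step, take $T\in\mathcal{R}_{UVR}$ with $|V(T)|\ge 4$; by the already-proven equivalences $T$ has a unique $\gamma$-set $D$, independent, with three private neighbors each. Root $T$ at a vertex and look at a deepest leaf; analyzing the structure near that leaf (its support vertex, the support's other neighbors) should reveal one of the four "attachment patterns" of operations $O1$–$O4$, so that removing that configuration yields a smaller tree $T'$ which one shows is still in $\mathcal{R}_{UVR}$ (equivalently still has a unique independent $\gamma$-set with the private-neighbor property), hence in $\mathscr{T}$ by induction, and then $T$ is obtained from $T'$ by the corresponding operation. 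The main obstacle is precisely this step: one must carefully enumerate the possible local configurations at a deepest leaf — a support vertex may have one, two, or three leaf neighbors, may or may not itself lie in $D$, and its "parent" toward the root may have various statuses — and verify in each case both that a legal reverse-operation exists and that the pruned tree $T'$ inherits membership in the class; handling the bookkeeping of statuses (ensuring the status of the attachment vertex $u$ in $T'$ matches the $\{A,C\}$ or $B$ or $C$ requirement of the operation) is where the real work lies, and it is the reason the four operations were designed with exactly those status constraints.
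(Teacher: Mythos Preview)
Your cycle $(i)\Rightarrow(iii)\Rightarrow(iv)\Rightarrow(ii)\Rightarrow(i)$ differs from the paper's $(i)\Rightarrow(ii)\Rightarrow(iii)\Rightarrow(iv)\Rightarrow(i)$, and the reordering creates real gaps, all stemming from the same missing ingredient: you never establish that $T$ is Roman (equivalently, that every $\gamma_R$-function has empty $V_1$).

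In your $(i)\Rightarrow(iii)$ step you write that the RDF $f=(V(T)\setminus S_B;\emptyset;S_B)$ has weight $2\gamma(T)$, ``so $\gamma_R(T)\le 2\gamma(T)$, and combined with \eqref{eqq} we get $\gamma_R(T)=2\gamma(T)$.'' But \eqref{eqq} only says $\gamma(T)\le\gamma_R(T)\le 2\gamma(T)$; it gives no lower bound of the form $\gamma_R(T)\ge 2\gamma(T)$, so equality does not follow and you have not shown $f$ is a $\gamma_R$-function. Your ``clean route'' for uniqueness is also wrong: for an arbitrary $\gamma_R$-function $g$, the set $V_2^g$ need \emph{not} dominate $T$ --- by Lemma~\ref{on} the vertices of $V_1^g$ have no neighbour in $V_2^g$ --- so you cannot conclude $|V_2^g|\ge\gamma(T)$. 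The paper handles this by an induction on $|S_B|$ with a case analysis on the last operation $O1$--$O4$, proving simultaneously that $f_T$ is the unique $\gamma_R$-function (hence $T$ is Roman); Observation~\ref{sabc} alone does not suffice.

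The same issue resurfaces in $(iv)\Rightarrow(ii)$: for the upper bound you modify $f=(V(T)\setminus D;\emptyset;D)$, but its weight $2|D|$ equals $\gamma_R(T)$ only if $T$ is Roman; for the lower bound via Lemma~\ref{minus} you need that no $\gamma_R$-function has a label-$1$ vertex, which you defer to ``the $(iv)\Rightarrow(iii)$ direction'' --- an implication you never prove. Finally, in $(ii)\Rightarrow(i)$ you invoke ``the already-proven equivalences'' to assert that $T$ has a unique $\gamma$-set with the stated properties, i.e.\ $(iv)$; but in your cycle nothing yet gives $(ii)\Rightarrow(iv)$, so this is circular. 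The paper supplies exactly this via its $(ii)\Rightarrow(iii)$ step (Claim~2), which is a genuine structural argument analysing the components of $T-v$ for $v\in V_2^f$; that, together with the operation-by-operation induction for $(i)\Rightarrow(ii)$, is the work your sketch is missing.
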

\begin{proof}
For any labeled tree $(H,S) \in \mathscr{T}$, $S_B(H)$ is a dominating set of $T$ 
(by Observation \ref{sabc}) and hence $f_H = (S_A(H) \cup S_C(H); \emptyset; S_B(H))$ is an RDF on $H$.

{\bf Claim 1}:  If $f_H$  is the unique $\gamma_R$-function on $(H,S) \in  \mathscr{T}$ then $H$ is in $\mathcal{R}_{UVR}$.

{\em Proof.}  Since $f_H$  is the unique $\gamma_R$-function on $(H,S) \in  \mathscr{T}$ and $V_1^{f_H}$ is empty, 
Lemma \ref{minus} implies that $\gamma_R(H-x) \geq \gamma_R(H)$ for each $x \in V(H)$. 
	If $x \in S_A(H) \cup S_C(H)$ then $f_H|_{H-x}$ is an RDF on $H-x$ of weight  $ \gamma_R(H)$. 
	Now, let $x \in S_B(H)$. Then $x$ has exactly $2$ private neighbors with respect to $S_B(H)$ (by Observation \ref{sabc}) , 
	say $y$ and $z$. 		Define $f_H^x : \{0,1,2\} \rightarrow V(H-x)$ as $f_H^x(y) = f_H^x(z) =1$
	and $f_H^x(t) = f_H(t)$ otherwise.  Since the weight of $f_H^x$ is $\gamma_R(H)$, we obtain $H \in \mathcal{R}_{UVR}$. 
					
			(i) $\Rightarrow$ (ii): Let $(T,S)$ be in  $\mathscr{T}$. By Claim 1, it is sufficient to prove that 
			                                           $f_T$ is actually  the unique $\gamma_R$-function on $T$. 
						We now proceed by induction on $|S_B|$. The base case is immediate since $T$ is a labeled star $K_{1,2}$. 
			Let $k \geq 2$ and suppose that for all labeled trees $(H, S^\prime) \in \mathscr{T}$ with $|S_B^\prime(H)| < k$ that 
			$f_H = (S_A^\prime(H) \cup S_C^\prime(H);\emptyset; S_B^\prime(H))$ is the unique $\gamma_R$-function on $H$. 
		By Claim 1, $H \in \mathcal{R}_{UVR}$. 
									
			Let $(T,S) \in \mathscr{T}$ have $|S_B(T)| =k$. Then $T$ can be obtained from a sequence
			$T_1=K_{1,2}, T_2,..,T_k =T$ of labeled trees, and  $T_{i+1}$ can be obtained from $T_i$ 
			by operation $O1$, $O2$, $O3$ or $O4$ for $i = 1,..,k-1$. All $T_i$ are clearly in $\mathscr{T}$. 
			We consider four possibilities depending on whether $T$ is obtained from $U=T_{k-1}$ 
			by operation $O1$, $O2$, $O3$ or $O4$. Note that $U \in \mathcal{R}_{UVR}$. 
			
{\em Case} 1: 	$T$ is obtained from $U=T_{k-1}$ by operation 	$O1$. 
                           	Suppose $T$ is obtained from $U$ by adding a path  $x,y,z$ and the edge $ux$
														where $u \in V(U)$ and $sta(u) \in \{A, C\}$,  $sta(x)=sta(z) = A$  and $sta(y)= B$.
			                      Clearly $f_T|_U = f_U$ which leads to $\gamma_R(U) = f_U(V(U)) = f_T(V(T)) -2 \geq \gamma_R(T) - 2$. 
																											
														Now let $f$ be any $\gamma_R$-function on $T$. 
														Suppose $f(u) \geq 1$. Then the weight of $f|_{U}$ would be  greater than $\gamma_R(U)$ and 
														$f(x) + f(y) + f(z) =2$, which leads to $\gamma_R(T) > 2 + \gamma_R(U)$, a contradiction. So, $f(u) = 0$ for each 
														$\gamma_R$-function $f$ on $T$. 
														
															Since $U \in \mathcal{R}_{UVR}$, it follows that $\gamma_R(U-u) = \gamma_R(U)$.  
														Hence if $f(x) \geq 1$ then $f(x) + f(y) + f(z) >3$ and this implies 
														$f(V(T)) > \gamma_R(U) + 2$, a contradiction. Thus $f(x) = 0$ and then $f(y) = 2$, $f(z) = 0$ and  $f|_U=f_U$. 
														All this implies that $f \equiv f_T$ is the unique $\gamma_R$-function on $T$ and 
														$\gamma_R(T) = \gamma_R(U) +2$. 	
														
			{\em Case} 2: 	$T$ is obtained from $U=T_{k-1}$ by operation 	$O2$. 
                           	Suppose $T$ is obtained from $U$ by adding a star $K_{1,3}$ with leaves   $x,z,t$ and a central vertex $y$,  
														and also adding the  edge $ux$ where $u \in V(U)$,  $sta(u) = sta(y) = B$,  $sta(x)=C$ and  $sta(z) = sta(t) = A$. 
															Since obviously  $f_T|_U = f_U$,  $\gamma_R(U) = f_U(V(U)) = f_T(V(T)) -2 \geq \gamma_R(T) - 2$. 
			                  														
														Let $f$ be an arbitrary  $\gamma_R$-function on $T$. 
														Hence either $f(y)=0$ and $f(z) = f(t) = 1$, or $f(y)=2$ and $f(z) = f(t) = 0$.
			                      In the former case we have $f(x)=2$, which leads to $f(u) = 0$. But then 
														since $U \in \mathcal{R}_{UVR}$, $\gamma_R(U) + 4 \leq f(V(T)) = \gamma_R(T)$, a contradiction. 
			                       Hence 	$f(y)=2$ and $f(z) = f(t) = 0$. But then $f(x) =0$ and 	$f|_U=f_U$.  
														From the above we conclude  that $f \equiv f_T$ is the unique $\gamma_R$-function on $T$ and 
														$\gamma_R(T) = \gamma_R(U) +2$. 										
			
			{\em Case} 3: 	$T$ is obtained from $U=T_{k-1}$ by operation 	$O3$. 
                           	Suppose $T$ is obtained from $U$ by adding a path  $x,y,z$  and the edge $uy$ 
														where $u \in V(T_i)$, $sta(u) =C$, $sta(x)=sta(z) = A$  and $sta(y)= B$.
														Since $f_T|_U = f_U$, we obtain $\gamma_R(U) = f_U(V(U)) = f_T(V(T)) -2 \geq \gamma_R(T) - 2$. 
																											
														Now we shall prove that $f_U|_{U-u}$ is the unique $\gamma_R$-function on $U-u$. 
														Since $\gamma_R(U-u) = \gamma_R(U)$ and $u$ has at least two  neigbors in $S_B(U) = V_2^{f_U}$,
														the restriction of $f_U$ on any component of $U-u$ is a $\gamma_R$-function. 
														Suppose there is a $\gamma_R$-function $g$ on $U-u$ different from  $f_U|_{U-u}$. 
														Then there is at least one component of $U-u$, say $U_r$, such that $f_U|_{U_r} \not\equiv g_{U_r}$. 
														Define now an RDF $h$ on $U$ as follows: $h(x) = g|_{U_r}(x)$ when $x \in V(U_r)$ and 
														$h(x) = f_U(x)$ otherwise. But then $h$ and $f_U$ have the same weight, a contradiction. 
														Thus, $f_U|_{U-u}$ is the unique $\gamma_R$-function on $U-u$.

														Now let $f$ be any $\gamma_R$-function on $T$. Obviously $f(y) \not = 1$. 
			                      Suppose  $f(y)=0$. Then $f(u) =2$, $f(x) = f(z) = 1$ and $f|_U$  is an RDF on $U$.  
														Since $f_U$ is the unique $\gamma_R$-function of $U$ and $f_U(u) = 0$, 
														an RDF $f|_U$  has weight more than $\gamma_R(U)$ and then 
														$\gamma_R(T) = f(V(T)) > \gamma_R(U) + 2$, a contradiction.
														Thus $f(y)=2$ and then $f(x) = f(z) = 0$. If $f(u) = 2$ then as above we again obtain a contradiction. 
														So $f(u) =0$  and since $f(y)=2$, it follows that $f|_{U-u}$ is a $\gamma_R$-function on $U-u$. 
														But we already know that $f_U|_{U-u}$ is the unique $\gamma_R$-function on $U-u$. 
														Therefore $f \equiv f_T$. Since $f$ was chosen arbitrarily, $f_T$ is the unique  
														$\gamma_R$-function on $T$ and $\gamma_R(T) = \gamma_R(U) + 2$.

			{\em Case} 4: 		$T$ is obtained from $U=T_{k-1}$ by operation 	$O4$. 								
				                   In this case $T = U \cup R + ux$, where $u \in V(U)$ with $sta(u) \in \{A,C\}$ and 
														$x$ is a central vertex of $R$, and $sta(x) = C$.  Note that $R$ is in $\mathcal{R}_{UVR}$ and 
														$f_R = (S_A(R)\cup S_C(R); \emptyset; S_B(R))$ is the unique $\gamma_R$-function on $R$. 
														Since $f_T|_U \equiv f_U$ and $f_T|_R \equiv f_R$ it follows that 
														$\gamma_R (U) = f_U(V(U)) = f_T(V(T)) - f_R(V(R)) \geq \gamma_R(T) - 4$. 
														Consider any $\gamma_R$-function $f$ on   $T$ and suppose $f(u) \not = 0$. 
														Then the weight of $f|_U$ is more than $\gamma_R(U)$ and the weight of $f|_R$ is $4$. 
														This leads to $\gamma_R(U) > \gamma_R(T) -4$, a contradiction. Hence $f(u) = 0$. 
														Assume now that $f(x) \not = 0$. 
														Since $U$ is in $\mathcal{R}_{UVR}$, $f|_U(V(U)) \geq \gamma_R(U)$ and $f|_R(V(R)) \geq 4+f(x) \geq 5$. 
														Hence $f(V(T)) \geq \gamma_R(U) + 5$, a contradiction. Thus $f(x) =0$. 
														But then $f|_U \equiv f_U$ and $f|_R \equiv f_R$. Thus $f \equiv f_T$ is the unique $\gamma_R$-function on $T$ 
														and $\gamma_R(T) =  \gamma_R(U) +4$.

(ii) $\Rightarrow$ (iii):   
 Let a tree $T$ be in $\mathcal{R}_{UVR}$ and let  $v \in V_2^f$ for some $\gamma_R$-function $f$ on $T$.

{\bf Claim 2.}     Let $u$ be a neighbor of $v$ and $T_u$ be the component of $T-v$ that contains $u$. Then

(a) $f|_{T_u}(V(T_u)) \leq \gamma_R(T_u) \leq f|_{T_u}(V(T_u)) + 1$.

(b) There are exactly $2$ components of $T-v$, say $Q_1$ and $Q_2$, such that $\gamma_R(Q_i) = f|_{T_u}(V(Q_i)) + 1$, $i=1,2$.

(c) If $u_i \in V(Q_i)$ is a neighbor of $v$, $i=1,2$, then $pn[v,V_2^f] = \{u_1, u_2, v\}$.

(d) $V_2^f$ is an independent set.

{\em Proof.}  
 (a) Assume $\gamma_R(T_u) <  f|_{T_u}(V(T_u))$ and let $g$ be any $\gamma_R$-function on $T_u$. 
        But then the function $h$ defined by $h(x) = f(x)$ when $x \in V(T) - V(T_u)$ and $h(x) = g(x)$ when $x \in V(T_u)$, 
				is an RDF on $T$ with weight less than $f(V(T))$, which is impossible. 
				
				Now assume $\gamma_R(T_u) >  f|_{T_u}(V(T_u))$. Hence  				
				$f|_{T_u}$ is no RDF on $T_u$. Then $f(u)=0$ and $V_2^{f|T_u}$ dominated $T_u-u$. 
				Define an RDF $l$ on $T_u$ by $l(u) =1$ and $l(x) = f(x)$ for all $x \in V(T_u-u)$. 
				Since $l(V(T_u)) = f|_{T_u}(V(T_u)) + 1$, the right side inequality is true. 
				
(b) Since $\gamma_R(T-v) = \gamma_R(T)$ and $f(v) =2$, the result follows by (a). 				

(c) By the proof of this claim up to here we know that $V_2^f - \{v\}$ dominates $N(v) - \{u_1,u_2\}$ and 
      neither $u_1$ nor $u_2$ is dominated by $V_2^f - \{v\}$. Thus $u_1,u_2 \in pn[v,V_2^f]$. 
			If $v \not\in pn[v,V_2^f]$ then the RDF  $g$ on $T$ defined by $g(x)=f(x)$ for all $x \in V(T) - \{u_1, u_2, v\}$, 
			$g(v)=0$ and $g(u_1) = g(u_2) =1$ is a $\gamma_R$-function on $T$ with $V_1^g \not = \emptyset$, a contradiction.
			
(d) The result immediately follows by (c).

We are now ready to prove the uniqueness of $f$. 			
			 Suppose there is a vertex $u \in N(v)$ such that $g(u) =2$ for some $\gamma_R$-function $g$ on $T$. 
		By Claim 2, $g \not \equiv f$ and $g(v) =0$.  Let without loss of generality, $u \not \equiv u_1$. 
		Hence $g|_{Q_1}$ is a $\gamma_R$-function on $T_1$. 
		By the proof of Claim 2 we already know that there is a $\gamma_R$-function $l$ on $Q_1$ with $l(u_1) =1$. 
		Consider now the $\gamma_R$-function $g_1$ on $T$ defined by $g_1(x) = l(x)$ when $x \in V(Q_1)$ and 
		$g_1(x) = g(x)$ otherwise.  Since $g_1(u_1) = l(u_1) =1$, $V_1^{g_1}$ is not empty, a contradiction.

(iii) $\Rightarrow$ (iv): Since $V_2^f$ is a dominating set of $T$, by Theorem \ref{un} it follows that 
                                              $V_2^f$ is the unique $\gamma$-set of $T$.

(iv) $\Rightarrow$ (i):  Denote by $\mathscr{H}$ the set of all trees $T$ for which (iv) holds. 
We shall prove that if $T \in \mathscr{H}$ then $T \in \mathscr{T}$. 
We proceed by induction on the domination number of $T$. 
If $T \in \mathscr{H}$ and $\gamma(T) =1$ then $T \equiv K_{1,2}$ and we are done. 
So, let $T \in \mathscr{H}$, $\gamma(T)=k \geq 2$ and suppose that each tree $H \in \mathscr{H}$ with 
$\gamma(H)  <k$ is in $\mathscr{T}$. Let $P: x_1,x_2,..,x_n$ be any diametral path in $T$. 
Then $x_n$ is a leaf and $x_{n-1} \in D$, where $D$ is the unique $\gamma$-set of $T$. 

{\it Case} 1: $deg(x_{n-1}) = 2$. 
Since  $T \in \mathscr{H}$,$\{x_{n-2},x_{n-1}, x_n\} = pn[x_{n-1},D]$ and all 
neighbors of $x_{n-2}$ but $x_{n-1}$ are in $V(T)-D$. 
Now by the choice of $P$, $N(x_{n-2}) = \{x_{n-1}, x_{n-3}\}$. But then $T-x_{n-3}x_{n-2}$ has exactly $2$ components, 
say $F_1$ and $F_2$, where $F_1 \equiv K_{1,2}$ is induced by $\{x_{n-2}, x_{n-1},x_n\}$. 
Since the set $D_1 = D-\{x_{n-1}\}$ is an independent dominating set of $F_2$ and $|pn[v,D_1]| = 3$ 
for each $v \in D_1$, Theorem \ref{un} implies that $D_1$ is the unique $\gamma$-set of $F_2$. 
Hence $F_2 \in \mathscr{H}$. By inductive hypothesis, $F_2 \in \mathscr{T}$.  
Since $x_{n-3} \not\in D_1$, $sta_{F_2}(x_{n-3})\in \{A,C\}$ (by Observation \ref{sabc}).  
 Let us consider $F_1$ as a labeled $K_{1,2}$.  But then $T$ is obtained from $F_2$ by operation $O1$. 
Thus, $T \in \mathscr{T}$.

{\it Case} 2: $deg(x_{n-1}) \geq 3$. 
By the choice of $P$, $x_{n-2}$ is the unique non-leaf neighbor of $x_{n-1}$. 
Now $deg(x_{n-1}) \geq 3$,  $|pn[x_{n-1},D]| = 3$ and $D$ is independent    together imply 
(a) $x_{n-1}$ is adjacent to exactly $2$ leaves, $x_n$ and say $y$, and (b) $|pn[x_{n-1},D]| = \{x_{n-1},x_n,y\}$. 
 First suppose $x_{n-2}$ is adjacent to at least $3$ vertices in $D$. 
 Then $T-x_{n-2}x_{n-1}$ has exactly $2$ components, say $F_3$ and $F_4$, 
 where $F_3 \equiv K_{1,2}$ with  $V(F_3) = \{x_{n-1},x_n,y\}$. 
 Since the set $D_2 = D-\{x_{n-1}\}$ is an independent dominating set of $F_4$ and $|pn[v,D_2]| = 3$ 
for each $v \in D_2$, Theorem \ref{un} implies that $D_2$ is the unique $\gamma$-set of $F_4$. 
Hence $F_4 \in \mathscr{H}$. By inductive hypothesis, $F_4 \in \mathscr{T}$ and by Observation \ref{sabc},
 $D_2$ consists of all vertices having status $B$.  
Since $x_{n-2} \not\in D_2$ and $x_{n-2}$ is adjacent to at least $2$ elements of $D_2$, 
again by Observation \ref{sabc} it follows $x_{n-2}$ has status $C$. 
 Let us consider $F_3$ as a member of $\mathscr{T}$. 
 But then $T$ is obtained from $F_4$ by operation $O3$. 
Thus, $T \in \mathscr{T}$.

So, let $z$ and $x_{n-1}$ are all neighbors of $x_{n-2}$ in $D$. 
Suppose first that $N(x_{n-2}) = \{x_{n-1},z\}$. 
Then $T-x_{n-2}z$ has exactly $2$ components, say $F_5$ and $F_6$, 
 where  $V(F_5) = \{x_{n-2}, x_{n-1}, x_n, y\}$. 
 Since the set $D_3 = D-\{x_{n-1}\}$ is an independent dominating set of $F_6$ and $|pn[v,D_3]| = 3$ 
for each $v \in D_3$, Theorem \ref{un} implies that $D_3$ is the unique $\gamma$-set of $F_6$. 
Hence $F_6 \in \mathscr{H}$. By inductive hypothesis, $F_6 \in \mathscr{T}$. 
Define labeling $S: V(T) \Rightarrow \{A,B,C\}$ as follows: (a) the restriction of $S$ on $F_6$ coincide 
with the  unique labeling of $F_6$ as a member of $\mathscr{T}$, and  (b) $S(x_{n-2}) = C$, $S(x_{n-1}) = B$ and 
$S(x_n) = S(y) = A$. Since $z \in D_3$,  $S(z) =B$ (by Observation \ref{sabc}). 
 But then $T$ is obtained from $F_6$ by operation $O2$. 
Thus, $T \in \mathscr{T}$.  
Finally let  $x_{n-2}$ have neighbors in $V(T)-D$. 
By the choice of $P$, (a) $x_{n-2}$ has exactly one neighbor in $V(T)-D$, say $u$, and 
(b) $z$ is a support vertex of degree $3$ which has $2$ leaves as neighbors.  
Then $T-x_{n-2}u$ has exactly $2$ components, say $F_7$ and $F_8$,  where  $u \in V(F_8)$ and $F_7$ is an unlabeled $R$. 
 Since the set $D_5 = D-\{x_{n-1},z\}$ is an independent dominating set of $F_8$ and $|pn[v,D_5]| = 3$ 
for each $v \in D_5$, Theorem \ref{un} implies that $D_5$ is the unique $\gamma$-set of $F_8$. 
Hence $F_8 \in \mathscr{H}$. By inductive hypothesis, $F_8 \in \mathscr{T}$.

Define labeling $S^{\prime}: V(T) \Rightarrow \{A,B,C\}$ as follows:
 (a) the restriction of $S^{\prime}$ on $F_8$ coincide  with the  unique labeling of $F_8$ as a member of $\mathscr{T}$, 
 and  (b)  $F_7$ together with  the restriction of $S^{\prime}$ on $F_7$  form a labelled $R$.  
 But then $T$ is obtained from $F_8$ by operation $O4$. 
Thus, $T \in \mathscr{T}$.   

(ii) $\Leftrightarrow$ (v): Immediately by Observation \ref{equi}.
{\tiny{$\blacksquare$}}	
\end{proof}

By the  proof of the previous theorem it immediately follows:
\begin{corollary}\label{SB}
If    $(T,S) \in  \mathscr{T}$ then $f_T = (S_A(T) \cup S_C(T); \emptyset; S_B(T))$ 
is the unique $\gamma_R$-function on $T$.  
\end{corollary}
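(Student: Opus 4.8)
The plan is to recognize that this corollary is not really a new result but an isolated sub-statement of the proof of Theorem~\ref{main}. In establishing the implication (i)~$\Rightarrow$~(ii) we did not verify $T\in\mathcal{R}_{UVR}$ directly; we first proved the stronger assertion that $f_T=(S_A(T)\cup S_C(T);\emptyset;S_B(T))$ is the \emph{unique} $\gamma_R$-function on $T$, and only afterwards invoked Claim~1. Thus the corollary is precisely that intermediate assertion, and its proof is the induction already carried out there. Accordingly, I would simply point to that argument, or, if a self-contained proof is wanted, repeat it in condensed form.

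The condensed form proceeds by induction on $|S_B(T)|$ (equivalently, on the length of a construction sequence for $(T,S)$); note each of $O1$--$O4$ strictly increases the number of $B$-vertices, so the inductive hypothesis applies to the predecessor tree. \emph{Base case:} $(T,S)$ is a labeled $K_{1,2}$, so $f_T$ puts $2$ on the central vertex and $0$ on the two leaves, with weight $2=\gamma_R(K_{1,2})$; a direct inspection of RDFs of weight at most $2$ on $K_{1,2}$ shows $f_T$ is the only $\gamma_R$-function. \emph{Inductive step:} write $T$ as obtained from some $U\in\mathscr{T}$ with $|S_B(U)|<|S_B(T)|$ by one of $O1$--$O4$; by the inductive hypothesis $f_U$ is the unique $\gamma_R$-function on $U$ (and, in the $O4$ case, $f_R$ is the unique $\gamma_R$-function on the attached copy of $R$), so by Claim~1 both $U$ and $R$ lie in $\mathcal{R}_{UVR}$. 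The four case analyses of the theorem's proof then pin down every value of an arbitrary $\gamma_R$-function $f$ on $T$: in each case $f$ is forced to vanish on the attachment vertex $u\in V(U)$, to restrict to $f_U$ on $U$ (and to $f_R$ on $R$ in the $O4$ case), and to take the prescribed values on the newly added vertices; hence $f\equiv f_T$, while simultaneously $\gamma_R(T)=\gamma_R(U)+2$ (respectively $\gamma_R(U)+4$ for $O4$). This is exactly the content reached at the end of Cases~1--4.

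Alternatively, and more briefly, one may invoke the already-established chain (i)~$\Rightarrow$~(ii)~$\Rightarrow$~(iii) of Theorem~\ref{main}: it gives that $T$ has a unique $\gamma_R$-function $f$ with $V_1^f=\emptyset$. Then $V_0^f=V(T)-V_2^f$ is dominated by $V_2^f$, so $\gamma(T)\le|V_2^f|=\gamma_R(T)/2\le\gamma(T)$ by \eqref{eqq}, whence $V_2^f$ is a $\gamma$-set of $T$; since $S_B(T)$ is the unique $\gamma$-set of $T$ (Observation~\ref{sabc}(iv)), we get $V_2^f=S_B(T)$ and therefore $f=(V(T)-S_B(T);\emptyset;S_B(T))=f_T$. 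I do not expect any genuine obstacle here: the only bookkeeping is the verification, inside operations $O1$--$O4$, that the attached vertices receive forced labels, which is precisely what the four cases of the proof of Theorem~\ref{main} already handle, so nothing beyond that proof is needed.
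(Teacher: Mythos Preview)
Your proposal is correct and matches the paper exactly: the paper's ``proof'' is simply the remark that the corollary follows immediately from the proof of Theorem~\ref{main}, and you have correctly identified that the uniqueness of $f_T$ is precisely the intermediate assertion established in the (i)~$\Rightarrow$~(ii) induction before Claim~1 is invoked. Your alternative route via (i)~$\Rightarrow$~(ii)~$\Rightarrow$~(iii) combined with Observation~\ref{sabc}(iv) is also valid and a pleasant shortcut, since it bypasses re-running the four-case analysis.
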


The class $URD$ of all graphs which have exactly one  $\gamma_R$-function 
was introduced and  investigated by Chellali and Rad in \cite{cr}. 
Theorem \ref{main} shows that  any  tree in $\mathcal{R}_{UVR}$ is also  in $URD$.

\begin{corollary}\label{vdel}
Let  $f$ be the unique $\gamma_R$-function on a tree $T \in \mathcal{R}_{UVR}$. 
If $u,v  \in pn[x,V_2^f]$ for some $x \in V_2^f$ then 
 $\gamma_R(T-\{u,v\}) = \gamma_R(T)-1$. 
\end{corollary}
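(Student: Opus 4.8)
The plan is to prove the two inequalities $\gamma_R(T-\{u,v\})\le\gamma_R(T)-1$ and $\gamma_R(T-\{u,v\})\ge\gamma_R(T)-1$ separately and combine them, using Theorem~\ref{main} as the structural input. First I would invoke Theorem~\ref{main}(iii): since $T\in\mathcal{R}_{UVR}$ (and any tree in $\mathcal{R}_{UVR}$ has order at least three, so the theorem applies), $f$ is the \emph{unique} $\gamma_R$-function on $T$, $V_1^f=\emptyset$, $V_2^f$ is independent, and $|pn[w,V_2^f]|=3$ for every $w\in V_2^f$. Independence of $V_2^f$ gives $x\in pn[x,V_2^f]$, and a member of $pn[x,V_2^f]$ other than $x$ cannot lie in $V_2^f$ and cannot lie in $V_1^f=\emptyset$, so it lies in $V_0^f$; in the principal case $\{u,v\}$ is exactly this pair, so $pn[x,V_2^f]=\{x,u,v\}$ with $f(u)=f(v)=0$ and $u\ne v$. (One must keep $u\ne v$: for $u=v$ one would have $T-\{u,v\}=T-u$ with $\gamma_R(T-u)=\gamma_R(T)$ because $T\in\mathcal{R}_{UVR}$, and the statement fails.) Set $T'=T-\{u,v\}$.

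For the upper bound I would lower the label of $x$: define $g$ on $T'$ by $g(x)=1$ and $g=f$ on $V(T')\setminus\{x\}$. Its weight is $1+\gamma_R(T)-f(u)-f(v)-f(x)=\gamma_R(T)-1$. To check that $g$ is an RDF on $T'$, take $w$ with $g(w)=0$; then $w\in V_0^f$ and $w\notin\{x,u,v\}$, so if $x$ were the only neighbour of $w$ in $V_2^f$ we would get $w\in pn[x,V_2^f]=\{x,u,v\}$, impossible; hence $w$ has a neighbour in $V_2^f\setminus\{x\}$, which survives in $T'$ with label $2$. Therefore $\gamma_R(T')\le\gamma_R(T)-1$.

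For the lower bound I would argue by contradiction. Assume $\gamma_R(T')\le\gamma_R(T)-2$ and let $g$ be a $\gamma_R$-function on $T'$. Extend it to $T$ by putting $g'(u)=g'(v)=1$ and $g'=g$ elsewhere. Every vertex with $g'$-value $0$ already had $g'$-value $0$ in $T'$, hence still retains a $2$-labelled neighbour inside $T'\subseteq T$, so $g'$ is an RDF on $T$ of weight $g(V(T'))+2\le\gamma_R(T)$. By minimality the weight equals $\gamma_R(T)$, so $g'$ is a $\gamma_R$-function on $T$; but $g'(u)=1$ means $V_1^{g'}\ne\emptyset$, contradicting the uniqueness of $f$ together with $V_1^f=\emptyset$ from Theorem~\ref{main}(iii). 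Hence $\gamma_R(T')\ge\gamma_R(T)-1$, and combining the two bounds gives $\gamma_R(T-\{u,v\})=\gamma_R(T)-1$.

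The verifications above (that the two modified functions are RDFs, and the weight counts) are routine. I expect the only genuine point to be the observation that the hypothesis $u,v\in pn[x,V_2^f]$ is precisely what guarantees that demoting $x$ leaves every remaining vertex dominated in $T'$, and that Theorem~\ref{main} --- specifically uniqueness of $f$ and $V_1^f=\emptyset$ --- is what drives the lower bound; this is where the statement would break for an arbitrary tree. A minor case to handle is the degenerate reading of the hypothesis in which one of $u,v$ equals $x$: then one deletes $x$ along with one of its $V_0^f$-private neighbours, and the same two maneuvers succeed after re-labelling the surviving $V_0^f$-private neighbour of $x$ with $1$ in place of $x$.
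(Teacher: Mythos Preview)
Your proof is correct. The underlying idea---demote $x$ from label $2$ to label $1$ and use the fact that $pn[x,V_2^f]=\{x,u,v\}$ to see that no remaining $0$-vertex loses its $2$-neighbour---is the same as the paper's, but the organization differs. The paper removes only $u$ first, builds the $\gamma_R$-function $g=(V_0^f-pn[x,V_2^f];\,pn[x,V_2^f]-\{u\};\,V_2^f-\{x\})$ on $T-u$ (which has weight $\gamma_R(T)=\gamma_R(T-u)$ since $T\in\mathcal{R}_{UVR}$), observes $v\in V_1^g$, and then invokes Lemma~\ref{minus} once to obtain both inequalities simultaneously: $\gamma_R(T-\{u,v\})=\gamma_R(T-u)-1=\gamma_R(T)-1$. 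Your version instead works directly on $T-\{u,v\}$ and proves the two inequalities separately; this is slightly longer but entirely self-contained, and your lower-bound contradiction is in effect a hand-rolled instance of Lemma~\ref{minus}. Your remark that the conclusion fails when $u=v$ is well taken, and your treatment of the degenerate case $x\in\{u,v\}$ is fine; the paper's formulation handles all cases uniformly because its $g$ is defined on $T-u$ regardless of whether $u=x$.
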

\begin{proof}
By Theorem \ref{main}, $|pn[x,V_2^f]| = 3$ and $x \in pn[x,V_2^f]$. 
Define an RDF $g$ on $T-u$ by $g=(V_0^f-pn[x,V_2^f]; pn[x,V_2^f]-\{u\}; V_2^f-\{x\})$. 
Since $f$ and $g$ have the same weights, $g$ is a  $\gamma_R$-function on $T-u$. 
Now, $v \in V_1^g $ and  Lemma \ref{minus} lead to $\gamma_R(T-\{u,v\}) = \gamma_R(T)-1$. 
\end{proof}

\begin{corollary}\label{edel}
Let $f$ be the unique $\gamma_R$-function on a tree $T \in \mathcal{R}_{UVR}$. 
If $x,y \in V_0^f$ and $xy \in E(G)$ then $T-xy$ and all its components are in $\mathcal{R}_{UVR}$.
\end{corollary}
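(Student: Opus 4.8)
The plan is to verify, for each of the two components of $T-xy$, condition (iv) of Theorem \ref{main}, and then to invoke Observation \ref{disc}. Put $V_2 = V_2^f$. By Theorem \ref{main} applied to $T$, $V_2$ is the unique $\gamma$-set of $T$, it is independent, and $|pn_T[v,V_2]| = 3$ for every $v\in V_2$. The single fact driving everything is that $x,y\in V_0^f$, so neither endpoint of the deleted edge lies in $V_2$; hence $N_T[w]\cap V_2 = N_{T-xy}[w]\cap V_2$ for \emph{every} vertex $w$. Two consequences are immediate: $V_2$ is still a dominating set of $T-xy$, and $pn_{T-xy}[v,V_2] = pn_T[v,V_2]$ for every $v\in V_2$.

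First I would determine the domination parameters of $T-xy$. Deleting an edge cannot decrease the domination number (a dominating set of $T-xy$ is a dominating set of $T$), and $V_2$ dominates $T-xy$, so $\gamma(T-xy) = |V_2| = \gamma(T)$; moreover any $\gamma$-set of $T-xy$ is a minimum dominating set of $T$ and hence equals $V_2$, so $V_2$ is the unique $\gamma$-set of $T-xy$ as well. Since $T$ is a tree, $T-xy$ has exactly two components, $T_x\ni x$ and $T_y\ni y$, and $\gamma(T-xy) = \gamma(T_x) + \gamma(T_y)$; the usual decomposition argument then shows that $D_x := V_2\cap V(T_x)$ is the unique $\gamma$-set of $T_x$, and similarly $D_y := V_2\cap V(T_y)$ for $T_y$. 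Each of $D_x$, $D_y$ is independent, being a subset of $V_2$. For $v\in D_x$, every $V_2$-private neighbour of $v$ in $T-xy$ lies in the component of $v$, and privacy there is measured against $D_x$; hence $pn_{T_x}[v,D_x] = pn_{T-xy}[v,V_2] = pn_T[v,V_2]$, a set of cardinality $3$, and the same reasoning applies in $T_y$.

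It remains to check that $T_x$ and $T_y$ each have order at least three, so that Theorem \ref{main} is applicable to them: since $x\in V_0^f$ it has a neighbour $a\in V_2$, and $a\neq y$ (as $y\in V_0^f$), so the edge $xa\neq xy$ survives in $T-xy$ and $a\in V(T_x)$; were $V(T_x)=\{x,a\}$ we would get $N_T[a]=\{x,a\}$, hence $|pn_T[a,V_2]|\le 2$, contradicting Theorem \ref{main}. Thus the implication (iv)$\Rightarrow$(ii) of Theorem \ref{main} gives $T_x,T_y\in\mathcal{R}_{UVR}$, and Observation \ref{disc} then yields $T-xy\in\mathcal{R}_{UVR}$ together with all its components. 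The only point requiring care is the book-keeping for private neighbourhoods when passing from $T$ to $T-xy$ and on to a single component; the observation that $xy$ has both ends outside $V_2$ is exactly what turns that passage into a one-line identity rather than a case analysis.
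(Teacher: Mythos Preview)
Your proof is correct. The strategy is the same as the paper's---show that the key structure is preserved under deletion of the edge $xy$, deduce uniqueness, pass to components, and then appeal to Theorem~\ref{main} together with Observation~\ref{disc}---but you carry it out through a different equivalent condition. The paper works directly with the Roman dominating function: since $x,y\in V_0^f$, the restriction $f_1=f|_{T-xy}$ is an RDF on $T-xy$ of the same weight, and Lemma~\ref{minuse} forces $\gamma_R(T-xy)=\gamma_R(T)$; then every $\gamma_R$-function on $T-xy$ is a $\gamma_R$-function on $T$, so $f_1$ is unique, and condition (iii) of Theorem~\ref{main} transfers to each component. You instead translate to ordinary domination and verify condition (iv) on each component. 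The paper's route is marginally shorter because it stays in the language of the hypothesis, whereas your route makes the private-neighbourhood bookkeeping more explicit. One point in your favour: you check that each component has order at least three (needed to invoke Theorem~\ref{main}), a detail the paper's proof passes over silently.
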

\begin{proof}
Since  $x,y \in V_0^f$, $f_1 = f|_{T-xy}$ is an RDF on $T-xy$. Hence $\gamma_R(T-xy) \leq \gamma_R(T)$. 
Now by Lemma \ref{minuse} it follows that  $\gamma_R(T-xy) =\gamma_R(T)$. 
Therefore $f_1$ is a $\gamma_R$-function on $T-xy$ and any  $\gamma_R$-function on $T-xy$ 
is a $\gamma_R$-function on $T$. By the uniqueness of $f$ on $T$ it follows that $f_1$ is 
 the unique $\gamma_R$-function on $T-xy$. Since $V_i^{f_1} = V_i^f$ for $i = 1,2,3$, 
the statement (iii) of Theorem \ref{main} holds for any component $U$ of  $T-xy$ and $f_1|_U$. 
Thus $U$ is in $\mathcal{R}_{UVR}$ because of   Theorem \ref{main}. 
But then $T-xy$ is also in $\mathcal{R}_{UVR}$ (by Observation \ref{disc}).
\end{proof}

Recall that for any graph $G \in \mathcal{R}_{UVR}$,  $\frac{2}{3}|V(G)| \geq \gamma_R(G)$ 
(Proposition \ref{3v2}).
Now we characterize all trees $T$ for which  $\frac{2}{3}|V(T)| = \gamma_R(T)$. 
Define $\mathscr{T}_1 = \{(T,S) \in \mathscr{T} \mid S_C(T) = \emptyset\}$. 
Clearly $(T,S) \in \mathscr{T}_1$ if and only if 
it is sufficient  to use only the operation $O1$ for building   $(T,S)$ 
from a labeled $K_{1,2}$  

\begin{proposition}\label{=3/2}
 If a tree  $T \in \mathcal{R}_{UVR}$ then   $\frac{2}{3}|V(T)| = \gamma_R(T)$ if and only if $T \in \mathscr{T}_1$.
\end{proposition}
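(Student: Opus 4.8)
The plan is to prove both implications of Proposition~\ref{=3/2} by combining the structural description of $\mathscr{T}$-trees from Observation~\ref{sabc} and Theorem~\ref{main} with the equality analysis in Proposition~\ref{3v2}. By Theorem~\ref{main}, $T \in \mathcal{R}_{UVR}$ forces $T \in \mathscr{T}$, and by Corollary~\ref{SB} the unique $\gamma_R$-function is $f_T = (S_A \cup S_C; \emptyset; S_B)$, so $\gamma_R(T) = 2|S_B|$ while $|V(T)| = |S_A| + |S_B| + |S_C| = 3|S_B| + |S_C|$ using $|S_A| = 2|S_B|$ from Observation~\ref{sabc}(ii). Hence $\frac{2}{3}|V(T)| = \gamma_R(T)$ holds if and only if $\frac{2}{3}(3|S_B| + |S_C|) = 2|S_B|$, i.e. if and only if $|S_C| = 0$, which is exactly the defining condition of $\mathscr{T}_1$. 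This already gives a clean equivalence, so the argument is essentially a bookkeeping computation once the right facts are assembled.

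First I would invoke Theorem~\ref{main} to get $T \in \mathscr{T}$ with its unique labeling $S$ (uniqueness from Corollary~\ref{unilab}), and Corollary~\ref{SB} to identify $f_T$ as the unique $\gamma_R$-function; this pins down $\gamma_R(T) = 2|S_B(T)|$. Next I would record the two counting identities from Observation~\ref{sabc}: every vertex of $S_A$ is adjacent to exactly one vertex of $S_B$ and $|S_A| = 2|S_B|$, while $S_B$ is an independent dominating set. Then $|V(T)| = |S_A| + |S_B| + |S_C| = 3|S_B(T)| + |S_C(T)|$. Substituting into $\frac{2}{3}|V(T)| = \gamma_R(T) = 2|S_B(T)|$ and simplifying yields $|S_C(T)| = 0$, i.e. $T \in \mathscr{T}_1$; conversely, if $T \in \mathscr{T}_1$ then $S_C(T) = \emptyset$ gives $|V(T)| = 3|S_B(T)|$ and hence $\frac{2}{3}|V(T)| = 2|S_B(T)| = \gamma_R(T)$. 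Both directions follow from the same displayed identity.

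I do not anticipate a serious obstacle; the only point requiring a little care is making sure the cardinality identity $|V(T)| = 3|S_B| + |S_C|$ is genuinely justified — it rests on the disjointness of $S_A, S_B, S_C$ (they partition $V(T)$ by construction of the labeling) together with $|S_A| = 2|S_B|$, both of which are supplied by Observation~\ref{sabc}. One could alternatively phrase the whole thing through the differential, using Theorem~\ref{diff}(i): $\partial(T) = |V(T)| - \gamma_R(T)$, so $\frac{2}{3}|V(T)| = \gamma_R(T)$ iff $\partial(T) = \frac{1}{3}|V(T)|$ iff $|S_C| = 0$ via the same count; but the direct route above is shorter and avoids introducing the differential. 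A final sentence should note consistency with the remark following the definition of $\mathscr{T}_1$, namely that $\mathscr{T}_1$-trees are exactly those built using only operation $O1$ (operations $O2$, $O3$, $O4$ each either create a $C$-vertex or attach to one, hence introduce or require a vertex of status $C$).
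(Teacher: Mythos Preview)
Your proposal is correct and follows essentially the same approach as the paper's own proof: invoke Theorem~\ref{main} and Corollary~\ref{SB} to get $T\in\mathscr{T}$ with $\gamma_R(T)=2|S_B|$, use Observation~\ref{sabc}(ii) for $|S_A|=2|S_B|$, and conclude that $|V(T)|=3|S_B|+|S_C|$ so that equality $\frac{2}{3}|V(T)|=\gamma_R(T)$ holds iff $S_C=\emptyset$. The paper's argument is slightly terser (it writes the chain as a single inequality with equality iff $S_C=\emptyset$), but the content is identical.
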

\begin{proof}
By Theorem \ref{main} and Corollary \ref{SB} it follows that 
$(T,S) \in  \mathscr{T}$ and $f_T = (S_A(T) \cup S_C(T); \emptyset; S_B(T))$ is 
 the unique $\gamma_R$-function on $T$. 
Then $\gamma_R(T) = 2|S_B(T)|$ and $|S_A(T)| = 2|S_B(T)|$ (by  Observation \ref{sabc}).  
Now $|V(G)| = |S_A(T)| + |S_B(T)| + |S_C(T)|  \geq |S_A(T)| + |S_B(T)| = \frac{3}{2}\gamma_R(T)$ 
with equality if and only if $S_C(T) = \emptyset$, as required.
\end{proof}

\begin{theorem}\label{minedges}
Let a connected $n$-order graph $G$ be in $\mathcal{R}_{UVR}$. 
Let the size of $G$ be minimum. 
\begin{itemize}
\item[(i)] Then  $|E(G)| = n-1$ if and only if   $n \in \{3,6,7\} \cup \{9, 10, ...\}$  and $G \in \mathscr{T}$.
\item[(ii)] If $n \in \{4,5\}$ then $|E(G)| = 2n-3$ and $G = K_2 \vee \overline{K_{n-2}}$.  
\item[(iii)] If $n=8$ then $|E(G)|	= 8$ and $G$  is the graph depicted in Figure \ref{fig:UVR}.							
\end{itemize}
\end{theorem}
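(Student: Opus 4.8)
The plan is to analyze, for each residue class of $n$ and each small value of $n$ separately, how sparse a connected graph in $\mathcal{R}_{UVR}$ of order $n$ can be, using the structural constraints already available: by Observation \ref{pn3} every $\gamma_R$-function $f$ on such a $G$ has $V_1^f=\emptyset$ and $|pn[v,V_2^f]|\ge 3$ for every $v\in V_2^f$, and by inequality \eqref{eq1} we have $|V_0^f|\ge 2|V_2^f|$, so $n\ge\frac32\gamma_R(G)$; in particular $\gamma_R(G)\le\frac23 n$. Since $\gamma_R(G)\ge 2$ always, the case $n\le 3$ forces $G=K_3$, giving $|E(G)|=2=n-1$, so $n=3$ belongs to part (i). For $n\ge 4$ I would split on whether $G$ is a tree.

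\emph{When a tree is possible.} First I would observe that if $G$ is a tree in $\mathcal{R}_{UVR}$ then $|E(G)|=n-1$ is the minimum size, and by Theorem \ref{main} $G\in\mathscr{T}$. Conversely I must decide for which $n$ the family $\mathscr{T}$ contains a tree of order $n$. Using the four operations $O1$--$O4$ and the order increments they induce ($O1$ and $O3$ add $3$ vertices, $O2$ adds $4$, $O4$ adds $|V(R)|=4$), together with the base tree $K_{1,2}$ on $3$ vertices, the set of attainable orders is $\{3\}+\{0,3,4,\dots\}$ generated additively; a short check shows this is $\{3\}\cup\{6,7\}\cup\{9,10,11,\dots\}$, i.e. every $n\ge 9$ plus $n\in\{3,6,7\}$, and that $n\in\{4,5,8\}$ is \emph{not} realizable as the order of a tree in $\mathscr{T}$ (for $n=4,5$ because $3$ is the only base and no operation is small enough; for $n=8$ because $8-3=5$ cannot be written as a nonnegative combination of $3$'s and $4$'s). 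This yields the "only if" direction of (i) and simultaneously forces parts (ii) and (iii) into the non-tree regime. The remaining content of (i) is that \emph{every} $G\in\mathscr{T}$ of the listed orders is minimum-size in $\mathcal{R}_{UVR}$, which is immediate since a connected graph has at least $n-1$ edges and $\mathscr{T}\subseteq\mathcal{R}_{UVR}$ by Theorem \ref{main}.

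\emph{The small exceptional orders $n\in\{4,5,8\}$.} Here $G$ must contain a cycle, so $|E(G)|\ge n$, and I must pin down the exact minimum. For $n\in\{4,5\}$: since $\gamma_R(G)\le\frac23 n<4$ we get $\gamma_R(G)\le 2$, hence $\gamma_R(G)=2$ and $\gamma(G)=1$, so $G$ has a universal vertex $w$. Then $V_2^f=\{w\}$ for the $\gamma_R$-function, and $|pn[w,\{w\}]|=n\ge 3$ is automatic; the real constraint is that $G\in\mathcal{R}_{UVR}$, i.e. deleting any vertex keeps $\gamma_R=2$, equivalently keeps $\gamma=1$ after deletion or keeps $\gamma_R=2$ via some other configuration. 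Deleting a non-universal vertex $v$: we need $\gamma_R(G-v)=2$, and one checks this forces a second vertex dominating $V(G)\setminus\{v\}$, i.e. every vertex outside $\{w\}$ together with $w$ must have a "partner"; analyzing this for $n=4,5$ shows the minimum is achieved exactly by $K_2\vee\overline{K_{n-2}}$ with $2n-3$ edges, and that no connected graph on $n$ vertices with fewer edges lies in $\mathcal{R}_{UVR}$ — this verification (ruling out $C_4$, $C_5$, and the few sparser candidates) is the routine-but-careful part. For $n=8$: now $\gamma_R(G)\le\frac{16}{3}$, so $\gamma_R(G)\in\{2,4\}$. If $\gamma_R(G)=2$ the universal-vertex analysis as above would again give $K_2\vee\overline{K_6}$ with $13$ edges, worse than $8$; so the minimum-size graph has $\gamma_R(G)=4$, $\gamma(G)=2$, $|V_2^f|=2$, $|V_0^f|=6$, and equality throughout \eqref{eq1} forces $|pn[v,V_2^f]|=3$ and the two stars $N[v]$ over $v\in V_2^f$ to partition most of $V(G)$; connectivity then requires at least one extra edge joining the two stars, and a case analysis on how the two triangles/stars attach yields exactly the $8$-edge graph of Figure \ref{fig:UVR}, together with a verification that it is indeed in $\mathcal{R}_{UVR}$ and that $7$ edges (a tree, excluded above) is impossible.

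\emph{Main obstacle.} The delicate step is the $n=8$ analysis: one must show the claimed $8$-vertex graph is the \emph{unique} minimizer and that no connected graph on $8$ vertices with $\le 8$ edges other than trees (already excluded) and this one belongs to $\mathcal{R}_{UVR}$. This requires combining the equality conditions in \eqref{eq1} (efficient-domination-like structure), the membership test via Lemma \ref{minus} (no $\gamma_R$-function puts a $1$ anywhere) and Lemma \ref{minuse}, and a finite but somewhat involved enumeration of how two $3$-element private-neighbor stars can be linked into a connected $8$-vertex graph with only one or two connecting edges; the bookkeeping to confirm $\mathcal{R}_{UVR}$-membership of the survivor, and to exclude near-misses such as two triangles joined by an edge augmented suitably, is where the real work lies.
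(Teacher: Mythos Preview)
Your outline for part (i) is essentially the paper's: compute the numerical semigroup of orders reachable in $\mathscr{T}$ from the base $3$ via the operation increments. Two slips to fix. First, the labeled tree $R$ is a $K_{1,2}$ to which $O2$ has been applied, so $|V(R)|=7$, not $4$; the increment set is $\{3,4,7\}$. This is harmless numerically since $7=3+4$, and your conclusion $\{3,6,7\}\cup\{9,10,\dots\}$ is correct. Second, for $n=3$ the minimizer is $P_3$ (the base of $\mathscr{T}$), not $K_3$; $|E(K_3)|=3\neq n-1$.

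For (ii) you actually do more than the paper, which simply invokes an exhaustive check of all connected graphs on $4$ and $5$ vertices. Your argument via $\gamma_R(G)\le\tfrac23 n$ and parity, forcing a universal vertex and then a second one, is cleaner and correct.

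For (iii) there is a real gap in your sketch, and your route diverges from the paper's. With $n=8$ and $\gamma_R(G)=4$ you have $|V_0^f|=6>4=2|V_2^f|$, so \emph{equality does not hold} in \eqref{eq1}; hence you cannot infer $|pn[v,V_2^f]|=3$ or any efficient-domination structure from that inequality. (Indeed Observation \ref{pn3} gives only $\ge 3$, and Remark \ref{e1} shows the excess can be arbitrary in general.) The paper avoids this by arguing directly: since trees of order $8$ are excluded, the minimum size is at least $n=8$, so it suffices to classify \emph{unicyclic} $8$-vertex graphs in $\mathcal{R}_{UVR}$. From $3|V_2^f|\le 8$ one gets $|V_2^f|\le 2$, and a dominating set of size $\le 2$ forces the cycle length $k\le 6$; the paper then runs through $k\in\{4,5,6\}$ and the possible placements of the two $V_2^f$-vertices on the cycle, eliminating all but the $k=4$ configuration of Figure \ref{fig:UVR}. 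Your star-gluing picture can be made to work, but you would first need an independent reason (e.g.\ minimality of $|E(G)|$ rather than equality in \eqref{eq1}) to control the private-neighborhood sizes; the paper's cycle-length enumeration sidesteps that entirely.
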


\begin{figure}[h]
	\centering
		\includegraphics{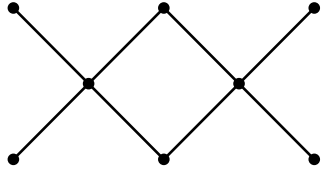}
	\caption{The unique 8-vertex unicyclic graph in  $\mathcal{R}_{UVR}$. }
	\label{fig:UVR} 
\end{figure}

\begin{proof}
(i) If $G$ is a tree then by Theorem \ref{main}, $G$ is in $\mathscr{T}$. 
     	Now either $G$ is a labeled $K_{1,2}$ or  there is a labeled tree $U \in \mathscr{T}$ 
		such that $G$ is obtained from $U$  by applying one of   operations $O1$, $O2$, $O3$ and $O4$ once.
		Hence the order of $G$ is $|V(U)| +3$ or  $|V(U)| +4$ or $|V(U)| +7$. 
 This immediately implies the desired result.

(ii) 	By checking all connected graphs of order $4$ and $5$ we obtain the result 
                        (all such graphs  can be found for example in \cite{har} ,  pages 215--217). 
												
(iii)  	Let $C_k: x_1,x_2,..,,x_k,x_1$ be the unique cycle in $G$ and $f$ a $\gamma_R$-function on $G$. 
        Since $V_2^f$ is a dominating set of $G$	and $|pn[v,V_2^f]| \geq 3$ for each $v \in V_2^f$ (by Observation \ref{pn3}), 
				$|V_2^f| \leq  2$ and $k \leq 6$. 	If 	$|V_2^f|=1$ then $G = K_{1,7}+e \not \in \mathcal{R}_{UVR}$. 
												So, let without loss of generality, $V_2^f = \{x_1,x_i\}$. Clearly neither $x_1$ nor $x_i$ has more than $2$ leaves as neighbors. 
												If $k=6$ then $i=4$ and
												either each of   $x_1$ and $x_4$ have exactly one leaf as a neighbor or one of  $x_1$ and $x_4$ is adjacent to 
												$2$ leaves.  In both cases $G \not \in \mathcal{R}_{UVR}$. 
												If $k=5$ then $i \in \{3,4\}$ and one of     $x_1$ and $x_i$ has $2$ leaves as neighbors. But then $G \not \in \mathcal{R}_{UVR}$. 
												Let $k=4$. Now $x_1$ and $x_i$ has $2$ leaves as neighbors. If $i \in \{2,4\}$ then $G \not \in \mathcal{R}_{UVR}$. 
												Thus $i=3$ and $G$ is the graph depicted in Figure \ref{fig:UVR}. Clearly $G  \in \mathcal{R}_{UVR}$.   
\end{proof}

We conclude with three open problems. 

\begin{problem} \label{uni}
Characterize all unicyclic graphs that are in  $\mathcal{R}_{UVR}$. 
\end{problem}

Recall that all cycles in $\mathcal{R}_{UVR}$ are $C_{3k}$, $k \geq 1$. 

\begin{problem} \label{size}
For any pair of positive integers $n$ and $k \leq \frac{2}{3} n$
find the maximum integer $s(n, k)$ such that there is an $n$-order  graph $G \in \mathcal{R}_{UVR}$ 
with $\gamma_R(G) =k$ and $|E(G)| = s(n,k)$. 
\end{problem}

Liu and Chang \cite{lc1} proved that if  $G$ is a graph with $\delta(G) \geq 3$  then $\gamma_R(G) \leq 2n/3$. 
By Proposition \ref{3v2} we have $\gamma_R(G) < 2n/3$ when $G \in \mathcal{R}_{UVR}$ and $\delta(G) \geq 3$.  
 So, the following problem naturally arises. 

\begin{problem} \label{three}
Find an attainable constant upper bound for $\gamma_R(G)/|V(G)|$ on all connected graphs
 $G \in \mathcal{R}_{UVR}$ with $\delta(G) \geq 3$.  
\end{problem}

\end{document}